\documentclass[12pt]{article}

\usepackage{amsmath,amsthm,amssymb,graphicx}


\def\N{\mathbb N}
\def\N0{\mathbb N_0}

\def\real{\mathbb R}

\def\alg{\mathbb A}

\def\graph{{\mathcal G}}

\def\vertexset{{\mathcal V}}

\def\edgeset{{\mathcal E}}
\def\tree{{\mathcal T}}
\def\treebar{\overline{\tree}}
\def\graphbar{\overline{\graph}}

\def\wt{\mu}

\def\cond{C}
\def\rest{R}

\def\dop{{\mathcal L}}

\def\domain {{\mathcal D}}



\title{\bf After the Explosion: \\
Dirichlet Forms and \\
Boundary Problems for \\
Infinite Graphs} 
\author{Robert Carlson \\
Department of Mathematics \\ 
University of Colorado at Colorado Springs \\
carlson@math.uccs.edu}


\newtheorem{thm}{Theorem}[section]
\newtheorem{cor}[thm]{Corollary}
\newtheorem{lem}[thm]{Lemma}
\newtheorem{prop}[thm]{Proposition}

\theoremstyle{definition}

\theoremstyle{remark}


\newcommand{\thmref}[1]{Theorem~\ref{#1}}

\newcommand{\lemref}[1]{Lemma~\ref{#1}}
\newcommand{\corref}[1]{Corollary~\ref{#1}}
\newcommand{\propref}[1]{Proposition~\ref{#1}}

 \numberwithin{equation}{section}


\begin{document}

\maketitle

\begin{abstract}

Formal Laplace operators are analyzed for a large class of resistance 
networks with vertex weights.  The graphs are completed with respect to 
the minimal resistance path metric.  Compactness and a novel connectivity
hypothesis for the completed graphs play an essential role.
A version of the Dirichlet problem is solved.
Self adjoint Laplace operators and the probability semigroups they 
generate are constructed using reflecting and absorbing conditions 
on subsets of the graph boundary.

\end{abstract}

\vskip 25pt

{\it Mathematics Subject Classification.}  Primary 34B45 

{\it Keywords.} boundary value problems on networks, resistance networks,
Dirichlet forms, Markov chain explosions.

\newpage

\section{Introduction}

This work has its roots in the challenge of extending differential equation models
for diffusion or wave propagation from domains in Euclidean space to infinite graphs
intended to resemble biological transport systems such as the arteries of the human 
circulatory system.  Such biological systems can include enormous numbers of branching segments. 
Short time transport across the network is essential, so treelike structures with
small numbers of large edges and vast collections of microscopic edges are typical.  
Faced with such complex heterogeneous structures, 
one hopes that appropriate infinite graph models
will suggest useful structural features and robustly posed problems.

Building on an earlier 'quantum graph' analysis of such problems \cite{Carlson08}, 
this work uses infinite graph and operator theoretic methods to treat a class of 
continuous time Markov chains.  Recall that continuous time Markov chains
use a system of constant coefficient differential equations 
\begin{equation} \label{kolbeq}
\frac{dP}{dt} = QP, \quad P(0) = I.
\end{equation}
to describe the evolution of probability densities $X(t) = X(0)P(t)$ 
on a finite or countably infinite set of states. 
An associated graph may be constructed by connecting states (vertices) $i$ and $j$
with an edge if $Q_{ij} \not= 0$.

In the finite state case the solution of \eqref{kolbeq} is simply $P(t) = e^{Qt}$.
When the set of states is infinite the formal description of the operator $Q$ 
may not be adequate to determine the semigroup $e^{Qt}$, an issue known 
in probability as the problem of explosions.  
Infinite graph models inspired by biological transport 
systems will typically face the explosion problem.  
By imposing restrictions on both the form of the Markov 
chain generator $Q$ and the structure of the associated graph viewed 
as a metric space, this work provides a resolution in terms of 
'reflecting' and 'absorbing' behavior at a graph boundary. 

It will be advantageous to use the Dirichlet form theory \cite{Davies,Fukushima}. 
To that end, consider a graph $\graph $ whose edges $e = [u,v]$ are equipped with
positive weights $R(u,v)$ which are interpreted as edge length. 
With $C(u,v) = 1/R(u,v)$, a symmetric bilinear form for functions on the vertex
set is defined by
\begin{equation} \label{bilinear}
B(f,g) = \frac{1}{2}\sum_{v \in \vertexset} \sum_{u \sim v} C(u,v)(f(v) - f(u))(g(v) - g(u)).
\end{equation}
Each vertex is also given a positive weight $\wt (v)$.  
Formal semigroup generators $\Delta _{\wt}$ are defined by  
\begin{equation} \label{Lapop}
\Delta _{\wt}f(v) = \frac{1}{\wt (v)}\sum_{u \sim v} C(u,v) (f(v) - f(u)).
\end{equation}        

$R(u,v)$ is often interpreted as electrical resistance.
The electrical network analogy is treated at length in \cite{Doyle,Lyons}. 
The recent work \cite{Georg10} treats electrical currents in a context 
similar to this paper, while \cite{Georg11} treats related topological questions.
An analysis of function theory on infinite trees motivated by modeling the
human lungs is in \cite{Maury}.
With the domain of functions with finite support,
$\Delta _{\wt}$ is a symmetric operator on $l^2(\wt )$.
In contrast to this paper, other recent work \cite{Colin,Jorgensen,KL1,KL2} has stressed 
cases when this symmetric operator is essentially selfadjoint, 
and so behavior at the graph boundary is not an issue.

The vertex set $\vertexset$ of an edge weighted locally finite graph $\graph $ 
can be equipped with a metric $d(u,v)$ obtained 
by minimizing the sum of the edge lengths of paths from $u$ to $v$.
By completing this metric space we obtain a metric space $\graphbar $ 
in which one can discuss features like the graph boundary and compactness.
If $\graph $ is a tree, then distinct points of $\graphbar $ 
can be separated by deleting a suitable edge.  Generalizing this idea,
our graphs will be required to have 'weakly connected' completions, 
with the property that, for any two distinct points, 
any path joining them must include an edge from a finite set.
This generalization identifies a rich class of edge weighted graphs with 
useful topological and function theoretic properties.  

The properties of weakly connected graph completions are developed 
in the second section.  In addition to trees, arbitrary graphs with
finite volume have weakly connected completions.  This class is
also preserved if we add suitably constrained edge sequences to a graph. 
Weakly connected completions are totally disconnected metric spaces.
When also compact, these spaces are topologically stable
with respect to decrease of the metric.  The weakly connected class
will be characterized using the separation of points property for an 
algebra of 'eventually flat' functions.

The third section treats the bilinear forms, vertex weights, and
associated operators.  The choice of vertex weights typically used
for discrete time Markov chains are contrasted with weights 
making $\Delta _{\wt}$ resemble a discretized second derivative.
The bilinear form is used to construct several 'Sobolev style' Hilbert spaces
on $\graph $ whose elements extend continuously to $\graphbar$.

Two main problems are treated in the fourth section.
The first, a version of the Dirichlet problem, asks for conditions under which
continuous functions on $\partial \graphbar $ have a unique harmonic
extension to $\graph $.  An example shows that a lack of compactness 
can lead to a negative result.  Using assumptions of compactness and
weak connectivity, a general positive result is established.   
The second problem is the resolution of
the explosion problem in terms of reflecting and absorbing boundary conditions.
The semigroups generated by the operators defined using these boundary conditions
are positivity preserving contractions on $l^1(\wt )$.

Despite the connections with probability, this work will not 
explicitly use probabilistic techniques or interpretations. 
We simply mention the classic work \cite{Feller57},and the 
recent works \cite{Fukushima,Liggett,Woess} as pointers to 
the enormous literature related to analysis of infinite state Markov chains. 

\section{Weakly connected graphs}

\subsection{Topology}

$\graph $ will denote a simple graph with a countable vertex set $\vertexset$
and a countable edge set $\edgeset$.  Each vertex 
will have at least one and at most finitely many incident edges.  
Vertices of degree $1$ are {\it boundary vertices}; the rest are {\it interior vertices}.
$\graph $ is assumed to have {\it edge weights} ({\it resistances}).  
That is, there is a function $\rest :\edgeset \to (0,\infty )$, denoted by
$\rest (u,v)$ when $[u,v] \in \edgeset $. 
General references on graphs are \cite{Chung,Diestel}.
Edge weights, considered the length of the edges, are commonly identified with 
electrical network resistance \cite{Doyle} or \cite{Lyons}, and then {\it edge conductance} 
is the reciprocal $\cond (u,v) = 1/\rest (u,v) $ if $\rest(u,v) > 0$, and $0$ otherwise.  

A {\it finite path} (sometimes called a walk) in $\graph $ connecting vertices $u$ and $v$ 
is a finite sequence of vertices
$u=v_0, v_1,\dots ,v_K = v$ such that $[v_k,v_{k+1}] \in \edgeset $ for $k = 0,\dots ,K-1$. 
$\graph $ is connected there is a finite path from $u$ to $v$ for all $u,v \in \vertexset$.  
Define a metric on (the vertices of) $\graph $ by   
\begin{equation} \label{metricdef}
d(u,v) = \inf_{\gamma } \sum_k \rest (v_{k+1},v_k), 
\end{equation}
the infimum taken over all finite paths  $\gamma $ joining $u$ and $v$.
If there is no finite path from $u$ to $v$ then $d(u,v) = \infty $.
$\graphbar $, with the extended metric $d$, will denote the metric space completion 
\cite[p. 147]{Royden} of $\graph $.  

Extending the combinatorial notion of path, a {\it path} in $\graphbar $
will be a sequence $\{ v_k \}$ with $v_k \in \vertexset $, $[v_k,v_{k+1}] \in \edgeset$,
where the index set may be finite (finite path), the positive integers 
(a ray), or the integers (a double ray).  The role of continuous paths in $\graphbar $  
is played by paths going from $u \in \graphbar $ to $v \in \graphbar $, 
which in the double ray case requires
$\lim _{k \to -\infty} d(v_k,u) = 0$ and $\lim _{k \to \infty} d(v_k, v) = 0$. 
The ray case is similar.
A path for which all vertices are distinct is a {\it simple path}.     
If $\graph $ is connected then there is a path joining any pair of points $u,v \in \graphbar $.
Modifying ideas from \cite{Carlson08}, say that $\graphbar $ is {\it weakly connected} 
if for every pair of distinct points $u,v \in \graphbar$ 
there is a finite set $W$ of edges in $\graph $ such that every path from
$u$ to $v$ contains an edge from $W$.

One may extend $\graph $ to a metric graph $\graph _m$ by identifying the combinatorial edge
$[u,v]$ with an interval of length $\rest(u,v)$.  With the usual metric on $\graph _m$,
its vertex set will be isomorphic to $\graph $.  By this device some of   
the results of \cite{Carlson08}, which should be consulted for more details, 
carry over to the present context.  
The next result is a simple example.

\begin{prop}
If $\tree$ is a tree then $\treebar $ is weakly connected.
\end{prop}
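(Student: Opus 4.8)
The plan is to show that any two distinct points of $\treebar$ can be separated by a \emph{single} edge, so that the finite set $W$ in the definition of weak connectivity may always be taken to be a singleton. The mechanism is the defining feature of a tree: every edge is a cut edge. Fix an edge $e = [a,b] \in \edgeset$. Deleting $e$ splits $\vertexset$ into the vertex sets $\vertexset_a$ and $\vertexset_b$ of the two components of $\tree \setminus e$, with $a \in \vertexset_a$ and $b \in \vertexset_b$. Because $\tree$ is a tree, the unique path in $\tree$ between a vertex $x \in \vertexset_a$ and a vertex $y \in \vertexset_b$ runs through $e$, so $d(x,y) = d(x,a) + \rest(a,b) + d(b,y) \geq \rest(a,b)$. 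Consequently the closures $\overline{\vertexset_a}$ and $\overline{\vertexset_b}$ in $\treebar$ are disjoint, lie at distance at least $\rest(a,b) > 0$ from one another, and partition $\treebar$: a Cauchy sequence of vertices cannot have infinitely many terms in each of $\vertexset_a$ and $\vertexset_b$, since terms on opposite sides are at distance at least $\rest(a,b)$, so its limit belongs to exactly one closure. Thus each edge induces a partition of $\treebar$ into two clopen pieces, and I would first record this fact.

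Next I would produce, for distinct $u, v \in \treebar$ with $\delta = d(u,v) > 0$, an edge $e$ placing $u$ and $v$ in different pieces. Since $\vertexset$ is dense in the completion, choose vertices $p$ and $q$ with $d(p,u) < \delta/4$ and $d(q,v) < \delta/4$; then $d(p,q) > \delta/2$, so $p \neq q$, and the unique simple path $p = x_0, x_1, \dots, x_m = q$ in $\tree$ is nontrivial. As one moves along this path, the edges $[x_{i-1}, x_i]$ give a nested increasing family of $p$-side sets, and I would select the edge $e$ at which $u$ passes to the $p$-side while $v$ still lies on the $q$-side; equivalently, $e$ is the edge of the unique $u$-to-$v$ geodesic in the associated metric graph $\tree_m$ traversed at a point at distance near $\delta/2$ from each endpoint. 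With $u \in \overline{\vertexset_a}$ and $v \in \overline{\vertexset_b}$ for this $e$, any path from $u$ to $v$ in $\treebar$ has vertices approaching $u$ (hence eventually in $\vertexset_a$) and vertices approaching $v$ (hence eventually in $\vertexset_b$); since consecutive vertices are adjacent and $e$ is the only edge joining $\vertexset_a$ to $\vertexset_b$, the path must contain $e$. Taking $W = \{e\}$ then verifies weak connectivity.

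The main obstacle is the careful treatment of the boundary points of $\treebar$, where I expect the real work to lie. It must be checked that a limit point lies on one definite side of the chosen edge, and in particular that the cut separating $p$ from $q$ can be selected so that the nearby ideal points $u$ and $v$ inherit the separation. The delicate case is an accumulation of edges of rapidly decreasing length, where no uniform lower bound on individual edge lengths is available; here the argument relies on the fixed separation distance $\rest(e) > 0$ attached to the \emph{single} chosen edge, together with local finiteness and the positivity of edge lengths, to guarantee that the two approximating sequences cannot both hug the same cut. Once the clopen partition and the correct choice of crossover edge are in hand, the conclusion that every connecting path must traverse $e$ is immediate from the tree structure.
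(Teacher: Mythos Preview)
The paper does not actually prove this proposition; it cites \cite{Carlson08} and remarks that the result carries over via the metric graph $\tree_m$. So there is no detailed argument to compare against, and your plan has to stand on its own.

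Your strategy is sound: every edge of a tree induces a clopen bipartition of $\treebar$ with the two pieces at distance at least $\rest(e)$, and a single separating edge suffices for $W$. The first half of your sketch (the partition $\overline{\vertexset_a}\sqcup\overline{\vertexset_b}$ and the bound $d(\overline{\vertexset_a},\overline{\vertexset_b})\ge\rest(a,b)$) is correct and cleanly stated.

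The part that needs tightening is exactly the one you flag. Writing $A_i,B_i$ for the $p$-side and $q$-side of the edge $e_i=[x_{i-1},x_i]$, the $A_i$ are nested increasing, so each of $u$ and $v$ switches from the $B$-side to the $A$-side exactly once as $i$ runs from $1$ to $m$. Your phrase ``select the edge at which $u$ passes to the $p$-side while $v$ still lies on the $q$-side'' tacitly assumes $u$ switches no later than $v$; if $v$ switches first you still get a separating edge, just with the roles of $u$ and $v$ interchanged. The genuinely dangerous case is that $u$ and $v$ switch at the \emph{same} index $i^*$, which would force both into the closure of the side branch at $x_{i^*-1}$ (the set $\overline{B_{i^*-1}\cap A_{i^*}}$). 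This is where your $\delta/4$ choice earns its keep: if both $u,v$ lie in that branch, then
\[
d(p,u)+d(q,v)\;=\;d(p,x_{i^*-1})+d(q,x_{i^*-1})+d(x_{i^*-1},u)+d(x_{i^*-1},v)\;\ge\;d(p,q)+d(u,v)\;>\;\tfrac{\delta}{2}+\delta,
\]
contradicting $d(p,u)+d(q,v)<\delta/2$. Once you insert this short computation, the existence of a separating edge on the $p$--$q$ path is fully justified, and the rest of your argument goes through.
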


The volume of a graph is defined as the sum of its edge lengths,
\[vol(\graph) = \sum_{[v_1,v_2] \in \edgeset} \rest(v_1,v_2).\]
Finite volume graphs also have weakly connected completions \cite{Carlson08}. 

\begin{prop}
If $vol (\graph ) < \infty $ then $\graphbar $ is weakly connected.
\end{prop}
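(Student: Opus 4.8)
The plan is to produce, for each pair of distinct points, a finite separating set consisting of the edges that cross a single sphere of well‑chosen radius centered at $u$; finiteness of that set will come from the finite volume through a co‑area style estimate. I would first fix distinct $u,v \in \graphbar$. If no path joins them, the empty set of edges serves vacuously, so I may assume a path exists; then $\delta := d(u,v)$ satisfies $0 < \delta < \infty$. For an edge $e = [a,b]$ I would set $m_e = \min\{d(a,u),d(b,u)\}$ and $M_e = \max\{d(a,u),d(b,u)\}$, and say $e$ \emph{crosses radius} $t$ when $m_e \le t \le M_e$. Since the single edge $[a,b]$ is itself a path of length $\rest(a,b)$, one has $d(a,b) \le \rest(a,b)$, whence $M_e - m_e = |d(a,u)-d(b,u)| \le \rest(a,b)$.

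Writing $N(t)$ for the number of edges crossing radius $t$, I note that $N = \sum_{e} \mathbf{1}_{[m_e,M_e]}$ is a countable sum of indicators, hence measurable, and Tonelli's theorem gives
\[
\int_0^\infty N(t)\,dt = \sum_{e=[a,b]\in\edgeset}(M_e - m_e) \le \sum_{e=[a,b]\in\edgeset}\rest(a,b) = vol(\graph) < \infty.
\]
Because a nonnegative integrable function is finite almost everywhere, the set $\{t \in (0,\delta): N(t) < \infty\}$ has full measure in $(0,\delta)$ and in particular is nonempty. I would fix such a $t$ and let $W$ be the finite set of edges that cross radius $t$.

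It then remains to verify that every path from $u$ to $v$ meets $W$. Given such a path $\gamma = \{v_k\}$, the fact that $\gamma$ approaches $u$ at one end (either $v_0 = u$, or $d(v_k,u)\to 0$) yields a vertex with $d(v_k,u) < t$, while at the other end $d(v_k,v)\to 0$ forces $d(v_k,u) \ge \delta - d(v_k,v) \to \delta > t$, yielding a vertex with $d(v_k,u) > t$. Traversing the finite segment of $\gamma$ between these two vertices, the quantity $d(v_k,u)$ passes from below $t$ to at or above $t$, so some consecutive pair satisfies $d(v_k,u) < t \le d(v_{k+1},u)$; that edge has $m_e \le t \le M_e$ and so lies in $W$. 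Hence $W$ separates $u$ from $v$, and $\graphbar$ is weakly connected.

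I expect the only real content to be the co‑area inequality $\int N(t)\,dt \le vol(\graph)$, which is exactly where the finite volume hypothesis enters; the chief subtleties are that $u,v$ may be boundary points rather than vertices and that $\gamma$ may be an infinite ray or double ray, both of which are absorbed into the crossing argument through the limiting conditions in the definition of a path from $u$ to $v$. This is the discrete counterpart of the metric‑graph reasoning of \cite{Carlson08}.
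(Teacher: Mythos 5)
Your argument is correct, but it is not the route the paper takes. The paper exploits finite volume in a more elementary way: given distinct $x,y \in \graphbar \setminus \graph$, it removes a finite set $W$ of edges so that the surviving edges have total length less than $d(x,y)/2$; a path from $x$ to $y$ avoiding $W$ would then contain, for $n$ large, vertices $v_{-n}$ and $v_n$ with $d(v_{-n},v_n) > d(x,y)/2$ that are nevertheless joined by a simple path of total length less than $d(x,y)/2$, a contradiction. So the paper's separating set is simply the finitely many ``long'' edges, chosen by a length threshold alone, and no measure theory is needed. Your co-area argument instead localizes the cut on a sphere $\{z : d(z,u)=t\}$ for a generic radius $t \in (0,d(u,v))$, using Tonelli together with the estimate $|d(a,u)-d(b,u)| \le \rest (a,b)$ to guarantee that almost every such sphere is crossed by only finitely many edges, and then a discrete intermediate-value step to force every path to cross it. Both proofs are sound and both hinge on $\sum_{e \in \edgeset} \rest (e) < \infty$; yours invokes slightly heavier machinery but produces a genuinely localized separating set (an edge cut between the ball of radius $t$ about $u$ and its complement), which is a stronger geometric conclusion, while the paper's is shorter and purely combinatorial. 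You are also more explicit about the degenerate cases (no connecting path, or an endpoint lying in $\vertexset$), which the paper dispatches by treating only ``the main case'' of two points of $\graphbar \setminus \graph$.
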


\begin{proof}
The main case considers distinct points $x$ and $y$ in $\graphbar \setminus \graph$.
Remove a finite set of edges from $\edgeset $ so that the remaining edgeset $\edgeset _1$
satisfies 
\[ \sum_{[v_1,v_2] \in \edgeset _1} \rest(v_1,v_2) < \frac{d(x,y)}{2}.\] 
Proceeding with a proof by contradiction,
suppose $(\dots ,v_{-1},v_0,v_1,\dots )$ is a path 
from $x$ to $y$ using only edges in $\edgeset _1$.
Then for $n$ sufficiently large $d(v_{-n},v_n) > \frac{d(x,y)}{2}$,
but there is a simple path from $v_{-n}$ to $v_n$ using only edges from $\edgeset _1$,
so $d(v_{-n},v_n) < \frac{d(x,y)}{2}$.
\end{proof}

The next result gives conditions allowing edges to be added
to a weakly connected graph without disturbing that property.

\begin{thm} \label{addedge}
Suppose the graph $\graph _0$, with vertex set $\vertexset$, has 
a weakly connected completion $\graphbar _0$.
Using the same vertex set $\vertexset$, enlarge $\graph _0$ to a graph $\graph _1$ 
by adding a sequence $\edgeset _1$ of edges $e_n$ whose lengths $\rest _n$ satisfy 
$\lim_{n \to \infty} \rest _n = 0$.  
Assume there is a positive constant $C$ such that 
\[d_{\graph _1}(u,v) \le d_{\graph _0}(u,v) \le C d_{\graph _1}(u,v), \quad u,v \in \vertexset.\]
Then $\graphbar _1$ is weakly connected.
\end{thm}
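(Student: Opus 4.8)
The plan is to transfer the finite separating edge sets from $\graphbar_0$ to $\graphbar_1$, using the metric comparison to control how the new edges can create shortcuts. First I would record that the hypotheses $d_{\graph_1} \le d_{\graph_0} \le C\, d_{\graph_1}$ make the two metrics bi-Lipschitz equivalent on $\vertexset$; hence they have the same Cauchy sequences, and $\graphbar_0$ and $\graphbar_1$ may be identified as sets, with distinct points remaining distinct. So fix distinct $x,y \in \graphbar_1$, regard them as points of $\graphbar_0$, and use weak connectivity of $\graphbar_0$ to choose a finite set $W_0 \subseteq \edgeset(\graph_0)$ met by every $\graph_0$-path from $x$ to $y$. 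Set $\eta = \min\{\rest(p,q) : [p,q] \in W_0\} > 0$. Deleting $W_0$ splits $\graph_0$ into connected pieces; since any $\graph_0$-path joining vertices in different pieces must traverse a $W_0$-edge, vertices in distinct pieces lie at $d_{\graph_0}$-distance at least $\eta$. Consequently the closures in $\graphbar_0$ of distinct pieces are disjoint (indeed $\eta$-separated), and $x,y$ lie in the closures of two \emph{distinct} pieces $\mathcal K_x \ne \mathcal K_y$: if they lay in the closure of a single piece one could assemble a $\graph_0$-path between them staying in that piece, contradicting the choice of $W_0$.

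The key observation concerns the added edges. Call a new edge $e_n = [a_n,b_n] \in \edgeset_1$ \emph{bridging} if its endpoints lie in different pieces of $\graph_0 \setminus W_0$. For such an edge the previous paragraph gives $d_{\graph_0}(a_n,b_n) \ge \eta$, while the metric comparison gives $d_{\graph_0}(a_n,b_n) \le C\, d_{\graph_1}(a_n,b_n) \le C \rest_n$. Hence $\rest_n \ge \eta / C$. Because $\rest_n \to 0$, only finitely many new edges are bridging; collect them into a finite set $W_1 \subseteq \edgeset_1$.

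I would then finish with $W = W_0 \cup W_1$, a finite set of edges of $\graph_1$, and claim every path in $\graphbar_1$ from $x$ to $y$ meets $W$. Suppose not; let $\gamma$ be such a path avoiding $W$. Every edge of $\gamma$ is either an old edge outside $W_0$ or a non-bridging new edge, and in both cases its two endpoints lie in a common piece of $\graph_0 \setminus W_0$. Since consecutive vertices of $\gamma$ therefore share a piece, all vertices of $\gamma$ lie in one piece $\mathcal K$; passing to the limits defining the path's endpoints puts both $x$ and $y$ in $\overline{\mathcal K}$. This contradicts $\mathcal K_x \ne \mathcal K_y$ together with the disjointness of distinct closures, proving the claim and hence the weak connectivity of $\graphbar_1$.

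I expect the main obstacle to be the bookkeeping of the first paragraph rather than the edge estimate: one must pass cleanly from the purely combinatorial statement ``every $\graph_0$-path from $x$ to $y$ meets $W_0$'' to the topological statement that $x$ and $y$ sit in the closures of distinct, pairwise $\eta$-separated pieces of $\graph_0 \setminus W_0$. The $\eta$-separation makes the pieces' closures disjoint and lets the limit arguments go through, but ruling out the ``same piece'' alternative requires building a genuine path (a ray or double ray) inside a single piece joining two of its boundary points; this uses that a $\graph_0$-geodesic between two nearby vertices of one piece, once shorter than $2\eta$, cannot leave that piece. Once this separation is in hand, the decisive and genuinely new step — that the metric comparison forces all but finitely many new edges to respect the cut — is immediate.
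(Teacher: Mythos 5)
Your proof is correct, and its decisive step is exactly the paper's: the bound $d_{\graph_0}(a_n,b_n)\le C\,d_{\graph_1}(a_n,b_n)\le C\rest_n$ shows that any new edge whose endpoints are separated by $W_0$ (hence at $d_{\graph_0}$-distance at least $\eta$) must have $\rest_n\ge\eta/C$, so only finitely many new edges can cross the cut. The surrounding architecture is organized differently, though. The paper does not decompose $\graph_0\setminus W_0$ into pieces; instead it takes $W_1=W_0\cup\{e_1,\dots,e_N\}$ with $N$ chosen so that $\rest_n<\epsilon/C$ for $n>N$, and performs surgery on a hypothetical $W_1$-avoiding path: each remaining new edge $[v_j,v_{j+1}]$ has $d_{\graph_0}(v_j,v_{j+1})\le C\rest_j<\epsilon$ and so can be replaced by a short $\graph_0$-path containing no edge of $W_0$, producing a forbidden $\graph_0$-path from $u$ to $v$ (or else some new edge violates the metric comparison directly). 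That route never needs to locate $u$ and $v$ in closures of distinct components, which is precisely the step you flag as the main obstacle; your version needs the auxiliary fact that two boundary points in the closure of a single piece are joined by a $W_0$-avoiding path (this is essentially the paper's \lemref{cloplem}, that $U_W(x)$ is clopen, and your ``length $<2\eta$ cannot leave the piece'' argument does supply it). In exchange, your a priori classification of bridging versus non-bridging edges gives an explicit finite separating set and sidesteps the one delicate point in the surgery argument, namely checking that the modified path still converges to $u$ and $v$ at its ends. Both proofs are sound; yours trades the paper's path surgery for the component/closure bookkeeping.
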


\begin{proof}

First note that $\graphbar _1 \setminus \graph _1 = \graphbar _0 \setminus \graph _0$,
since the set of Cauchy sequences of vertices has not changed.
It will suffice to consider distinct points $u$ and $v$ in $\graphbar _1 \setminus \graph _1$ 
which are joined by a path in $\graphbar _1$.  
Let $W_0$ be a finite set of edges in $\graph _0$ such that every path in $\graphbar _0$ from
$u$ to $v$ contains an edge $[v_1,v_2]$ from $W_0$.  

Pick $\epsilon > 0$ such that $\epsilon < \rest (v_1,v_2)$ for all edges $[v_1,v_2] \in W_0$.
Find $N$ so that the lengths $R_n$ of edges $e_n \in \edgeset _1 $ 
satisfy $\rest _n < \epsilon /C$ for $n > N$.
Let $W_1$ be the set of edges $W_0 \cup \{ e_1,\dots ,e_N \}$ in $\graph _1$. 
Suppose there is a path $\gamma $ in $\graphbar _1$ 
joining $u$ to $v$, but not containing any edge from $W_1$.  

Let $\Gamma $ be the set of edges $e_j \in \edgeset _1$ which are in $\gamma $.
$\Gamma $ is not empty since $\gamma $ contains at least one edge not in $\graph _0$.  
Since $\gamma $ contains no edge from $W_1$, the edges $e_j \in \Gamma $ have 
length $\rest _j < \epsilon /C$.  If an edge $e_j = [v_j,v_{j+1}] \in \Gamma $ 
has $d_{\graph _0}(v_j,v_{j+1}) < \epsilon $, 
then $e_j$ can be replaced by a finite path $\gamma _j$ in $\graph _0$
with length at most $CR_j$, and containing no edge from $W_0$.
Thus there is at least one $e_j = [v_j,v_{j+1}] \in \Gamma $ 
with $d_{\graph _0}(v_j,v_{j+1}) \ge \epsilon $.
But the inequalities $d_{\graph _1} (v_j,v_{j+1}) < \epsilon /C$ while 
$d_{\graph _0}(v_j,v_{j+1}) \ge \epsilon $ contradict the hypotheses, so no such path
from $u$ to $v$ exists, and $\graphbar _1$ is weakly connected.

\end{proof}

The conclusion of \thmref{addedge} may be false if the edge lengths $R_n$ 
have a positive lower bound.  Start with $\graph _0$ which is 
be weakly connected and connected.  Take distinct points 
$x,y \in \graphbar _0 \setminus \graph _0$.  Suppose $\gamma $ is a path from
$x$ to $y$.  Take sequences of vertices $x_n,y_n$ from $\gamma $  
with $ x_n \to x$ and $y_n \to y$ such that $x_n \not= y_n$, 
and $[x_n,y_n]$ is not an edge in $\graph _0$, 
Form $\graph _1$ by adding edges $[x_n,y_n]$ to $\graph _0$, with
$\rest (x_n,y_n) = d_{\graph _0}(x_n,y_n)$.
Since $d_{\graph _1}(u,v) = d_{\graph _0}(u,v)$ for all vertices $u,v$,
the vertex sets for $\graph _0$ and $\graph _1$ are isometric.  
The edge lengths $\rest (x_n,y_n)$ have a positive lower bound, and
$\graphbar _1$ is not weakly connected,    

\begin{thm} \label{disconnect}
Assume $\graphbar $ is weakly connected. 
If $U$ and $V$ are disjoint compact subsets of $\graphbar $,
then there is a finite set $W$ of edges in $\graph $ such that
every path from $U$ to $V$ contains an edge in $W$.  
\end{thm}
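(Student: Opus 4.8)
The plan is to upgrade the two-point weak connectivity hypothesis to a statement about compact sets by a covering argument, the essential new ingredient being a quantitative stability property of edge separation. The basic observation is this: if $W$ is a finite set of edges and $\ell = \min\{\rest(v_1,v_2) : [v_1,v_2] \in W\} > 0$, then any path using an edge of $W$ has length at least $\ell$ (it contains that whole edge), so any two vertices at distance less than $\ell$ are joined by a path avoiding $W$.

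First I would fix a finite edge set $W$ and analyze the relation on $\graphbar$ given by ``joined by a path avoiding $W$.'' The claim is that this relation partitions $\graphbar$ into clopen classes. For a boundary point $x$, choose vertices $x_k \to x$; for $k,k'$ large, $d(x_k,x_{k'}) < \ell$, so by the observation above $x_k$ and $x_{k'}$ lie in the same connected component of the graph $\graph$ with the edges of $W$ deleted. Thus the component is eventually constant along any approximating sequence, and this common value is independent of the sequence (interleave two such sequences), so every point of $\graphbar$ acquires a well-defined $W$-component. Two points are joined by a $W$-avoiding path exactly when their components agree; and since $d(x,y) < \ell$ forces equal components, the open ball $B(x,\ell)$ lies entirely in the class of $x$. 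Hence each class is open, and therefore, being the complement of the union of the remaining classes, clopen.

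With this in hand the stability step is immediate. Given $(u,v) \in U \times V$, disjointness gives $u \neq v$, so weak connectivity provides a finite edge set $W_{u,v}$ such that every path from $u$ to $v$ meets $W_{u,v}$; equivalently $u$ and $v$ lie in distinct $W_{u,v}$-classes. Setting $\ell_{u,v} = \min\{\rest(v_1,v_2) : [v_1,v_2] \in W_{u,v}\}$, the balls $B(u,\ell_{u,v})$ and $B(v,\ell_{u,v})$ are contained in the classes of $u$ and of $v$ respectively. Since those classes are distinct, any path joining a point of $B(u,\ell_{u,v})$ to a point of $B(v,\ell_{u,v})$ must contain an edge of $W_{u,v}$.

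Finally I would invoke compactness. The products $B(u,\ell_{u,v}) \times B(v,\ell_{u,v})$, over $(u,v) \in U \times V$, form an open cover of the compact set $U \times V$; extracting a finite subcover indexed by $(u_i,v_i)$, $1 \le i \le n$, I set $W = \bigcup_{i=1}^n W_{u_i,v_i}$, a finite union of finite sets. Any path from $U$ to $V$ has endpoints $p \in U$ and $q \in V$ lying in some $B(u_i,\ell_{u_i,v_i}) \times B(v_i,\ell_{u_i,v_i})$, so by the stability step it contains an edge of $W_{u_i,v_i} \subseteq W$. The anticipated main obstacle is the clopen decomposition of the second paragraph, specifically assigning a well-defined $W$-component to an ideal boundary point and checking openness of the classes; the uniform lower bound $\ell$ on the length of any path crossing $W$ is precisely what makes both of these go through.
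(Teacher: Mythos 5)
Your proposal is correct and takes essentially the same approach as the paper: fix, for each $(u,v)\in U\times V$, the separating set $W(u,v)$ given by weak connectivity, observe that the separation persists for points within distance $\epsilon < \min_{[v_1,v_2]\in W(u,v)}\rest(v_1,v_2)$ of $u$ and of $v$ respectively, and extract a finite subcover of the compact product $U\times V$ from the resulting balls, taking $W$ to be the union of the finitely many $W(u_i,v_i)$. The clopen $W$-component analysis you develop to justify the stability step is precisely the content of the paper's \lemref{cloplem} (the sets $U_W(x)$), which the paper's proof of this theorem uses only implicitly.
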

 
\begin{proof}
Since $\graphbar $ is weakly connected, if $u \in U$ and $v \in V$ there is a 
finite set $W(u,v)$ of edges in $\graph $ such that
every path from $u$ to $v$ contains an edge in $W$.  
Take $\epsilon > 0$ such that $\epsilon < \rest (v_1,v_2)$ for all edges $[v_1,v_2] \in W$.
If $z_1 \in B_{\epsilon }(u)$, the open $\epsilon $ ball centered at $u$,  
and $z_2 \in B_{\epsilon }(v)$, then
every path from $z_1$ to $z_2$ contains an edge in $W(u,v)$.  
The collection $\{ B_{\epsilon}(u) \times B_{\epsilon}(v), u \in U, v \in V \}$ 
is an open cover of the compact set $U\times V$, so there is a finite subcover
$B_{\epsilon _n}(u_n) \times B_{\epsilon _n}(v_n)$ for $n = 1,\dots ,N$.   
Take $W = \cup_n W(u_n,v_n)$.
\end{proof}

Suppose $W$ is a nonempty finite set of edges in $\edgeset $.
For $x \in \graphbar $, let $U_W(x)$ be the set of points $y \in \graphbar $
which can be connected to $x$ by a path containing no edge of $W$. 

\begin{lem} \label{cloplem}
For all $x \in \graphbar $ the set $U_W(x)$ is both open and closed in $\graphbar $.
\end{lem}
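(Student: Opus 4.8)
The plan is to realize $U_W(x)$ as a level set of a locally constant ``component'' map on $\graphbar$, which is automatically clopen. First I would exploit that $W$ is finite: set $\epsilon_0 = \tfrac12 \min_{[v_1,v_2] \in W} \rest(v_1,v_2)$, which is positive. The governing local observation is that any finite path of total length below $2\epsilon_0$ cannot contain an edge of $W$, since each such edge alone has length at least $2\epsilon_0$. Consequently, if two vertices satisfy $d(u,v) < 2\epsilon_0$, then a finite path realizing a length below $2\epsilon_0$ joins them without meeting $W$, so $u$ and $v$ lie in the same connected component of the graph obtained by deleting the edges of $W$ from $\graph$. Because $\vertexset$ is dense in the completion $\graphbar$, every ball $B_\epsilon(p)$ with $\epsilon \le \epsilon_0$ contains vertices, and by the triangle inequality all of them lie in a single such component $K$.

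This lets me define $c(p)$, for $p \in \graphbar$, to be the common component of the vertices in $B_{\epsilon_0}(p)$; shrinking the radius only shrinks the vertex set inside one component, so $c$ is well defined. For $q \in B_{\epsilon_0/2}(p)$ one has $B_{\epsilon_0/2}(q) \subseteq B_{\epsilon_0}(p)$, whence $c(q) = c(p)$; thus $c$ is locally constant, and a locally constant map has clopen level sets. The remaining task is to prove $U_W(x) = c^{-1}(c(x))$.

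The inclusion $U_W(x) \subseteq c^{-1}(c(x))$ is routine: a path from $x$ to $y$ avoiding $W$ has consecutive vertices joined by edges outside $W$, so all of its vertices sit in one component $K$; since the vertices of the path that approach or equal each endpoint eventually enter the $\epsilon_0$-ball about that endpoint, both $c(x)$ and $c(y)$ equal $K$.

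The reverse inclusion is where the real work lies, and the main obstacle is constructing, for a boundary endpoint, an actual path that simultaneously avoids $W$ and converges to that endpoint. Given $c(y) = c(x) = K$, for any boundary endpoint $z \in \{x,y\}$ I would pick vertices $u_n \to z$ with $d(u_n,z) \to 0$, all eventually lying in $K$, and join each $u_n$ to $u_{n+1}$ by a finite path of length below $2\,d(u_n,u_{n+1})$. For large $n$ this length is below $2\epsilon_0$, so the connecting path avoids $W$; moreover every vertex on it lies within $2\,d(u_n,u_{n+1}) + d(u_n,z)$ of $z$, a quantity tending to $0$, so the concatenated ray genuinely converges to $z$ while staying in $K$. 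Splicing the ray for the $x$-side to that for the $y$-side (replacing a ray by the endpoint itself whenever that endpoint is already a vertex) through a finite $W$-avoiding path inside the common component $K$ produces a path from $x$ to $y$ missing $W$, giving $y \in U_W(x)$. Combining the two inclusions identifies $U_W(x)$ with the clopen set $c^{-1}(c(x))$, completing the argument.
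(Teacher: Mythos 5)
Your proof is correct, and its core mechanism is the same as the paper's: choose a threshold below the minimum length of the edges in $W$ (your $2\epsilon_0$), so that any path of total length under that threshold necessarily avoids $W$, and conclude that metrically close vertices lie in the same ``$W$-avoiding'' class. The paper runs this observation directly: if $y\in U_W(x)$ and $z$ is a vertex with $d(y,z)<\epsilon$, concatenating a short $W$-avoiding path from $z$ to $y$ with a $W$-avoiding path from $y$ to $x$ puts $z$ in $U_W(x)$, and the same argument applies to the complement. What you add is twofold. First, you repackage the argument as a locally constant component map $c$ on $\graphbar$ whose level sets are automatically clopen, and identify $U_W(x)$ with $c^{-1}(c(x))$; this is a cleaner organizing principle but not a genuinely different idea. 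Second, and more substantively, you explicitly handle the points of $\graphbar\setminus\graph$: since membership in $U_W(x)$ is defined via rays or double rays converging to the endpoints, the inclusion $c^{-1}(c(x))\subseteq U_W(x)$ requires actually constructing a $W$-avoiding path that converges to a boundary endpoint, which you do by splicing finite paths of length less than $2\,d(u_n,u_{n+1})$ between successive terms of a vertex sequence $u_n\to z$ and checking that the concatenated ray still converges. The paper's two-line proof works only with vertices $y,z$ and leaves this convergence step implicit, so your version fills a genuine (if routine) gap rather than merely restating the published argument.
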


\begin{proof}
Take $\epsilon > 0$ such that $\epsilon < \rest (v_1,v_2)$ for all edges $[v_1,v_2] \in W$.
Suppose $y$ and $z$ are vertices with $d(z,y) < \epsilon $, so there is a path
of length smaller than $\epsilon $ from $y$ to $z$.  If there is a path from 
$y$ to $x$ containing no edge from $W$, then 
by concatenating these paths there is a path from $z$ to $x$ containing no edge from $W$.
This shows that $U_W(x)$ and the complement of $U_W(x)$ are both open.
\end{proof}

\begin{thm} \label{totally} 
A weakly connected $\graphbar $ is totally disconnected.
\end{thm}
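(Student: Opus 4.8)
The plan is to show that any two distinct points of $\graphbar$ can be separated by a clopen set; this forces the connected component of each point to be a singleton, which is precisely total disconnectedness. The machinery for producing clopen sets is already available: for any nonempty finite edge set $W$ and any $x \in \graphbar$, the set $U_W(x)$ is both open and closed by \lemref{cloplem}. So the task reduces to choosing $W$ appropriately for a given pair of distinct points, and weak connectivity is exactly what supplies such a $W$.

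Fix distinct $x, y \in \graphbar$. By weak connectivity there is a finite edge set $W$ such that every path from $x$ to $y$ contains an edge of $W$. First I would observe that $y \notin U_W(x)$: by the defining property of $W$ there is no path from $x$ to $y$ avoiding $W$, so $y$ is not connected to $x$ by such a path. Next I would check that $x \in U_W(x)$. For a vertex this is immediate; for a boundary point $x \in \graphbar \setminus \graph$ a short argument is needed, which I expect to be the one delicate point. Choose $\epsilon > 0$ with $\epsilon < \rest(v_1,v_2)$ for every $[v_1,v_2] \in W$. Approximating $x$ by a ray of vertices $v_k \to x$ built from edges, the tail of the ray from $v_k$ onward has total length less than $\epsilon$ once $k$ is large, so every edge in that tail has length less than $\epsilon$ and hence lies outside $W$; thus $v_k \in U_W(x)$ for large $k$. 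Since $U_W(x)$ is closed by \lemref{cloplem} and $v_k \to x$, we conclude $x \in U_W(x)$.

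With $x \in U_W(x)$ and $y \notin U_W(x)$, the clopen set $U_W(x)$ contains $x$ but not $y$. To finish, let $C$ be the connected component of $x$ in $\graphbar$. If $C$ contained some point $y \neq x$, then $C \cap U_W(x)$ and $C \setminus U_W(x)$ would be two disjoint nonempty subsets, each relatively open in $C$, whose union is $C$, contradicting the connectedness of $C$. Hence $C = \{x\}$, and since $x$ was arbitrary, every connected component of $\graphbar$ is a single point.

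The main obstacle is the verification that $x \in U_W(x)$ for boundary points, that is, that the combinatorial notion of being ``connected by a path avoiding $W$'' survives passage to the completion. This is where the smallness of the tail edge lengths, which forces them out of the finite set $W$, combines with the closedness half of \lemref{cloplem} to do the work. Everything else is a direct appeal to the weak connectivity hypothesis and to the clopen property already established.
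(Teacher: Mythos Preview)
Your argument is correct and follows essentially the same route as the paper: pick $W$ from weak connectivity, invoke \lemref{cloplem} to see that $U_W(x)$ is clopen, and observe that it separates $x$ from $y$. The paper's proof is terser---it simply notes that $U_W(v_1)$ and $U_W(v_2)$ are disjoint and concludes---whereas you take the extra care to verify explicitly that $x \in U_W(x)$ for boundary points, a point the paper leaves implicit.
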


\begin{proof}
Suppose $v_1$ and $v_2$ are distinct points in $\graphbar$, with 
$W$ being a finite set of edges in $\graph $ such that every path from
$v_1$ to $v_2$ contains an edge from $W$.    
The set $U_W(v_1)$ is both open and closed.  
Since $U_W(v_1)$ and $U_W(v_2)$ are disjoint, 
$U_W(v_2) \subset U_W^c(v_1)$, the complement of $U_W(v_1)$ in $\graphbar $. 
Thus $v_1$ and $v_2$ lie in different connected components. 
\end{proof}

If $\graphbar $ is totally disconnected and compact, it
has a rich collection of clopen sets, that is sets which are both open and closed.
In fact \cite{Carlson08} or \cite[p. 97]{Hocking} 
for any $x \in \graphbar $ and any $\epsilon > 0$
there is a clopen set $U$ such that $x \in U \subset B_{\epsilon }(x)$.  In particular
any compact subset of $\graphbar $ can then be approximated by a clopen set.

Changing the metric on $\graph $ may change the completion $\graphbar $ and
the functions on $\graph $ that extend continuously to graphbar.  
Given $\graph $ and two weight functions $R_0$ and $R_1$, let 
$\graphbar _0$ and $\graphbar _1$ denote the completions of $\graph$ with respect to the
associated metrics $d_0$ and $d_1$.
Say that the weight function $R_1(u,v)$ is smaller than the weight function $R_0(u,v)$
if $R_1(u,v) \le R_0(u,v)$ for every edge $[u,v] \in \edgeset $. 
If $R_1$ is smaller than $R_0$, the metric $d_1$ on $\graph $ extends to a 
pseudometric \cite[p. 140-141]{Royden} on $\graphbar _0$; that is, 
there may be distinct points $x,y \in \graphbar _0$ with $d_1(x,y) = 0$.
The next result establishes a stability property for weakly connected graph completions 

\begin{thm} \label{changemet}
Suppose $\graphbar _0 $ is weakly connected.
If $R_1$ is smaller than $R_0$ then the pseudometric on $\graphbar _0$
induced by $R_1$ is a metric.
If in addition $(\graphbar _0,d_0)$ is compact, then $(\graphbar _0,d_0)$ and $(\graphbar _0,d_1)$ 
are homeomorphic.
\end{thm}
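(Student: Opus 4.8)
The plan is to work with the natural extension of $d_1$ to $\graphbar_0$ and then reduce the homeomorphism claim to a standard compactness argument. For points $x,y \in \graphbar_0$ I would choose vertex sequences $x_n \to x$ and $y_n \to y$ in the metric $d_0$. Since $R_1$ is smaller than $R_0$, for each finite path $\gamma$ the $R_1$-length is dominated by the $R_0$-length, so $d_1(u,v) \le d_0(u,v)$ for vertices $u,v$. Consequently $d_0$-Cauchy sequences are $d_1$-Cauchy, the limit $d_1(x,y) = \lim_n d_1(x_n,y_n)$ exists, and a triangle inequality estimate shows it is independent of the chosen sequences; this is the induced pseudometric, and it satisfies $d_1 \le d_0$ throughout $\graphbar_0$. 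Two things then remain: that $d_1(x,y) = 0$ forces $x = y$, and, under compactness, that the identity map is a homeomorphism.

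For the metric property, fix distinct $x,y \in \graphbar_0$. Weak connectivity supplies a finite edge set $W$ such that every path from $x$ to $y$ meets $W$, and I would set $\delta = \min_{[v_1,v_2]\in W} R_1(v_1,v_2)$, which is positive since $W$ is finite and $R_1$ is positive on edges. By \lemref{cloplem} the set $U_W(x)$ is open and contains $x$, and likewise for $U_W(y)$; because every $x$-to-$y$ path meets $W$, these two clopen sets are disjoint. Hence for $n$ large one has $x_n \in U_W(x)$ and $y_n \in U_W(y)$, so $x_n$ and $y_n$ lie in different $W$-avoiding components. Any combinatorial path in $\graph$ from $x_n$ to $y_n$ must therefore contain an edge of $W$ --- otherwise concatenation would place $y_n$ in $U_W(x)$ --- and so its $R_1$-length is at least $\delta$. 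Taking the infimum over such paths gives $d_1(x_n,y_n) \ge \delta$, and letting $n \to \infty$ yields $d_1(x,y) \ge \delta > 0$. Thus the pseudometric separates points and is a metric.

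For the second assertion, I would view the identity as a map $\mathrm{id}\colon (\graphbar_0,d_0) \to (\graphbar_0,d_1)$. Since $d_1 \le d_0$ it is $1$-Lipschitz, hence continuous, and it is trivially a bijection of the underlying set. By the first part $(\graphbar_0,d_1)$ is a metric space, in particular Hausdorff, while $(\graphbar_0,d_0)$ is assumed compact, and a continuous bijection from a compact space onto a Hausdorff space is a closed map and therefore a homeomorphism, giving the conclusion. The substance lies entirely in the first part: the delicate point is that weak connectivity controls paths between the \emph{limit} points $x,y$, whereas the lower bound on $d_1$ must be extracted from paths between the nearby \emph{vertices} $x_n,y_n$. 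The clopen sets $U_W$ are exactly what transfer the obstruction from the limits to their approximants, and recognizing them as the crux is the main conceptual step; the remaining estimates and the compactness lemma are routine.
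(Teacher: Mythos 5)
Your proposal is correct and follows essentially the same route as the paper: weak connectivity produces the finite edge set $W$, \lemref{cloplem} transfers the obstruction from the limit points to nearby vertices so that $d_1(x_n,y_n)$ is bounded below by the minimal $R_1$-length of an edge in $W$, and the homeomorphism follows from the standard fact that a continuous bijection from a compact space to a Hausdorff space is closed. The only cosmetic difference is that the paper first disposes of the case where one of the points is a vertex by citing local finiteness, whereas you treat all distinct pairs uniformly; both are fine.
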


\begin{proof}
The usual construction \cite[p. 147]{Royden} identifies the 
completion of a metric space with equivalence classes of Cauchy sequences,
two sequences $\{ x_n \}$ and $\{ y_n \}$ being equivalent when $d(x_n,y_n) \to 0$.
Since $\graph $ is locally finite, it is only possible to have distinct points  
$x,y \in \graphbar _0$ with $d_1(x,y) = 0$ if $x,y \in \graphbar _0 \setminus \graph $.

Suppose $x$ and $y$ are distinct points of $\graphbar _0 \setminus \graph $,
and that $W$ is a finite set of edges such that 
any path from $x$ to $y$ contains an edge from $W$. 
Let $\{ x_n \}$ and $\{ y_n \}$ be sequences in $\vertexset $ with $x_n \to x$ and $y_n \to y$.  
By \lemref{cloplem} there is an $N$ such that $n \ge N$ implies any path in $\graph $ 
from $x_n$ to $y_n$ must contain an edge from $W$.  
Let $0 < \epsilon < R_1(u,v)$ for all edges $[u,v] \in W$.  
Then $d_1(x_n,y_n) \ge \epsilon $ for all $n \ge N$, so
the sequences $\{ x_n \}$ and $\{ y_n \}$ are still inequivalent Cauchy sequences 
with respect to $d_1$, showing that $d_1$ is a metric on $\graphbar _0$.

Let $\iota $ denote the identity map from $(\graphbar _0,d_0)$ to $(\graphbar _0,d_1)$,
which is distance reducing, so continuous.  If $(\graphbar _0,d_0)$ is compact,
then so is $(\graphbar _0,d_1)$.  Following \cite[p.123]{Folland},
suppose $K \subset (\graphbar _0,d_0)$ is closed.
Then $(\iota ^{-1})^{-1}(K) = \iota (K)$ is compact, so closed, and $\iota ^{-1}$ is continuous.     
\end{proof}

\subsection{Function theory}

There is an algebra of functions with pointwise addition and multiplication
well matched to the weakly connected graph completions.
Define the 'eventually flat' functions $\alg $ to be
the real algebra of functions $\phi :\vertexset \to \real $
such that the set of edges $[u,v]$ in $\edgeset $ with 
$\phi (u) \not= \phi (v)$ is finite.   
  
\begin{lem}
All functions $\phi \in \alg $ extend continuously to $\graphbar $.
\end{lem}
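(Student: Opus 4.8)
The plan is to show that every $\phi \in \alg$ is uniformly continuous on $(\vertexset, d)$ and then to pass to the metric completion, obtaining the continuous extension to $\graphbar$. The finiteness condition defining $\alg$ is exactly what supplies a uniform modulus of continuity. To set up, let $W$ be the finite set of edges $[u,v] \in \edgeset$ with $\phi(u) \neq \phi(v)$. If $W$ is empty then $\phi$ is constant on each connected component of $\graph$ and the claim is immediate, so assume $W \neq \emptyset$ and put $\epsilon_0 = \min\{\rest(u,v) : [u,v] \in W\}$, which is positive because $W$ is finite and the edge weights are positive.

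The main step is the claim that $\phi(u) = \phi(v)$ whenever $d(u,v) < \epsilon_0$. Indeed, by the definition \eqref{metricdef} of the metric there is a finite path $\gamma$ from $u$ to $v$ of total length less than $\epsilon_0$; every edge of $\gamma$ then has length less than $\epsilon_0$, so no edge of $\gamma$ lies in $W$. Since $\phi$ changes value only across edges of $W$, it is constant along $\gamma$, giving $\phi(u) = \phi(v)$. This is uniform continuity in its strongest form: the single radius $\epsilon_0$ forces $\phi$ to be locally constant, independent of the point.

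I would then construct the extension directly from Cauchy sequences. For $x \in \graphbar$ choose a sequence $\{x_n\}$ in $\vertexset$ with $x_n \to x$; being Cauchy, it eventually satisfies $d(x_m,x_n) < \epsilon_0$, so $\phi(x_n)$ is eventually constant, and I set $\tilde\phi(x)$ equal to that eventual value. Independence of the chosen sequence follows because two equivalent Cauchy sequences satisfy $d(x_n,y_n) \to 0$, hence $d(x_n,y_n) < \epsilon_0$ and so $\phi(x_n) = \phi(y_n)$ for large $n$. Continuity of $\tilde\phi$ follows by transporting the local constancy to the limit: if $x,y \in \graphbar$ satisfy $d(x,y) < \epsilon_0$, approximating sequences satisfy $d(x_n,y_n) < \epsilon_0$ eventually, whence $\tilde\phi(x) = \tilde\phi(y)$.

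The point requiring care is that mere continuity of $\phi$ on $\vertexset$ would not by itself guarantee an extension to the completion; one genuinely needs uniform control. The work of the proof is therefore concentrated in the observation that the finiteness built into the definition of $\alg$ yields the uniform lower bound $\epsilon_0$ on the lengths of the \emph{active} edges, upgrading the trivial local constancy into the uniform implication $d(u,v) < \epsilon_0 \Rightarrow \phi(u) = \phi(v)$. Everything else is a routine application of the universal property of the metric completion.
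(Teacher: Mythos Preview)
Your proof is correct and rests on the same core observation as the paper's: the finite set $W$ of ``active'' edges has a positive minimum length $\epsilon_0$, so any path shorter than $\epsilon_0$ avoids $W$ and $\phi$ is constant along it. The paper packages this via \lemref{cloplem} (the sets $U_W(x)$ are clopen) and then reads off that $\phi(x_n)$ is eventually constant along any sequence $x_n \to x$, while you phrase it directly as uniform continuity and invoke the extension property of the completion; the underlying argument is identical, and your version is arguably more self-contained since it does not appeal to the clopen lemma and is more explicit about well-definedness and continuity of the extension.
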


\begin{proof}
Let $W$ be the finite set of edges $[u,v]$ with $\phi (u) \not= \phi (v)$.
Suppose $x \in \graphbar $, $x_n \in \graph $, and $x_n \to x$.
If $x \in \graph $ then $x_n = x$ for $n$ sufficiently large.
Suppose $x \in \graphbar \setminus \graph $.
By \lemref{cloplem} the set $U(W)(x)$ is open, so there is an $N$ such that 
$n \ge N$ implies $x_n \in U_W(x)$ and $\phi (x_n) = \phi (x_{n+1})$.
Take $\phi (x) = \lim_{n \to \infty} \phi (x_n)$.
\end{proof}

Making use of this lemma, functions $\phi \in \alg $ are extended continuously
to functions on $\graphbar $.

\begin{lem}
If $\graph $ is connected, then any $\phi \in \alg $ has finite range.
For $c \in \real $, $\phi ^{-1}(c)$ is a clopen set in $\graphbar $.
\end{lem}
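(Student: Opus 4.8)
The plan is to read off the finite range from the combinatorial structure of $\graph$, and then to obtain the clopen property of the level sets either from continuity together with the discreteness of a finite subset of $\real$, or more directly from the clopen partition supplied by \lemref{cloplem}.

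First I would let $W$ be the finite set of edges $[u,v]$ with $\phi(u) \neq \phi(v)$, the same set used in the preceding lemma. Deleting the edges of $W$ leaves the vertex set $\vertexset$ unchanged but can break connectivity; since $\graph$ is connected and deleting a single edge raises the number of connected components by at most one, the graph $(\vertexset, \edgeset \setminus W)$ has at most $|W| + 1$ components. On any one component $\phi$ is constant, because two vertices there are joined by a finite path using no edge of $W$, and $\phi$ is unchanged across every such edge. Hence $\phi$ takes at most $|W| + 1$ values on $\vertexset$. By the continuous extension of the previous lemma, where $\phi(x) = \lim_n \phi(x_n)$ is the eventual constant value of $\phi$ along any approximating vertex sequence, the extended function acquires no further values, so $\phi$ has finite range on all of $\graphbar$.

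For the level sets I would use that the clopen sets $U_W(x)$ of \lemref{cloplem} partition $\graphbar$ and that $\phi$ is constant on each of them: any two points of $U_W(x)$ are joined through $x$ by paths avoiding $W$, so their extended $\phi$-values agree. Consequently $\phi^{-1}(c)$ is a union of sets $U_W(x)$, hence open, while its complement is the union of the remaining $U_W(x)$, also open; therefore $\phi^{-1}(c)$ is clopen. Equivalently, since the range is a finite subset of $\real$ the value $c$ is isolated in it, so for small $\delta$ the set $\phi^{-1}(c) = \phi^{-1}((c-\delta, c+\delta))$ is open and equals the closed set $\phi^{-1}([c-\delta, c+\delta])$; if $c$ lies outside the range then $\phi^{-1}(c) = \emptyset$ is trivially clopen.

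I expect the work here to be bookkeeping rather than a genuine obstacle. The two points to check carefully are that the continuous extension introduces no values beyond the finite vertex range, and that $\phi$ really is constant on each $U_W(x)$ for boundary points and not merely on interior vertices. Closedness of $\phi^{-1}(c)$ is immediate from continuity; it is the openness that genuinely requires the finite, discrete range --- equivalently the clopen partition by the $U_W(x)$.
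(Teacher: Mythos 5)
Your proposal is correct and follows essentially the same route as the paper: both derive the finite range from the finiteness of the edge set $W$ on which $\phi$ jumps (you count the at most $|W|+1$ components of $\graph$ with $W$ deleted, the paper instead notes each value of a nonconstant $\phi$ is attained at an endpoint of an edge in $W$), and both then get the clopen property from continuity together with the fact that the complement of $\phi^{-1}(c)$ is a finite union of closed level sets. Your extra observation that $\phi$ is constant on each $U_W(x)$, including at points of $\graphbar\setminus\graph$, is a correct and slightly more explicit way to see the openness.
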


\begin{proof}
Suppose $\phi \in \alg $ is not constant, and suppose $u \in \vertexset $. 
Find a path $(u = v_0,v_1,\dots ,v_N)$ such that
$\phi (v_n) = \phi (u)$ for $n \le N$ and $\phi (v_N) \not= \phi (w)$ 
for some $w$ adjacent to $v_N$.   
Since the set of such vertices $v_N$ is finite, 
$\phi (u)$ has one of a finite set of values.

For $c \in \real $, $\phi ^{-1}(c)$ is a closed set, and its complement in $\graphbar $
is the union of a finite collection of closed sets.

\end{proof}

The next result shows that $\alg $ separates points of $\graphbar $ if and only
if $\graphbar $ is weakly connected.

\begin{thm} \label{separate}
Suppose $\graphbar $ is weakly connected.  If $x$ and $y$ are distinct points of
$\graphbar $, there is a function $\phi \in \alg $ whose range is $\{ 0 ,1 \}$
such that $\phi (z) = 0$ for $z$ in an open neighborhood $U$ of $x$, and  
$\phi (z) = 1$ for $z$ in an open neighborhood $V$ of $y$. 

Conversely, if $\alg $ separates points of $\graphbar $, then 
$\graphbar $ is weakly connected. 
\end{thm}

\begin{proof}
Let $W$ be a finite set of edges in $\graph $ such that every path from $x$ to $y$ 
contains an edge from $W$.  By \lemref{cloplem} the set $U_W(x)$ is both open and
closed in $\graphbar $, as is $U_W^c(x)$
Define $\phi (z) = 0$ for $z \in U_W(x)$ and $\phi (z) = 1$ for $z \in U_W^c(x)$.
For every vertex $v$ and adjacent vertex $w$ we have $\phi (v) = \phi (w)$ 
unless $[v,w] \in W$.  Since $W$ is finite, $\phi \in \alg $.

In the other direction, suppose $\alg $ separates points of $\graphbar $.
Let $x$ and $y$ be distinct points in $\graphbar $, and suppose $\phi \in \alg $
with $\phi (x) < \phi (y)$.  Let $W$ be the finite set of edges $[u,v]$
such that $\phi (u) \not= \phi (v)$.  If $\gamma $ is any path starting at $x$ which 
contains no edge from $W$, then $\phi $ must be constant along $\gamma $.
That is, every path from $x$ to $y$ must contain an edge from $W$, so
$\graphbar $ is weakly connected. 

\end{proof}

\begin{cor} \label{sepsets}
Suppose $\graphbar$ is weakly connected.  If $\Omega $ and $\Omega _1$
are nonempty disjoint compact subsets of $\graphbar $, then there is a function
$f \in \alg $ such that $0 \le f \le 1$,
\[f(x) = 1, \quad x \in \Omega , \quad f(y) = 0, \quad y \in \Omega _1.\]
\end{cor}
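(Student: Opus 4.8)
The plan is to use \thmref{disconnect} to reduce the problem to blocking all paths between the two compact sets with a single finite edge set, and then to build $f$ as the indicator function of a clopen union of path-components cut out by that edge set. First I would apply \thmref{disconnect} to the disjoint compact sets $\Omega$ and $\Omega _1$, obtaining a finite set $W$ of edges in $\graph $ such that every path from $\Omega $ to $\Omega _1$ contains an edge of $W$.

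With $W$ fixed, I would use the relation ``joined by a path containing no edge of $W$'' to partition $\graphbar $ into the sets $U_W(z)$; reversal and concatenation of paths make this an equivalence relation, so the $U_W(z)$ are pairwise disjoint and cover $\graphbar $, and by \lemref{cloplem} each one is open (indeed clopen). Let $S = \bigcup_{x \in \Omega } U_W(x)$ be the union of those classes meeting $\Omega $. As a union of open sets $S$ is open; and since the $U_W(z)$ partition $\graphbar $, the complement $S^c$ is exactly the union of the classes \emph{disjoint} from $\Omega $, hence also open. Therefore $S$ is clopen, and I stress that this avoids any need to count how many components $W$ produces.

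Next I would verify that $S$ separates the two sets. By construction $\Omega \subset S$. If some $y \in \Omega _1$ lay in $S$, then $y \in U_W(x)$ for some $x \in \Omega $, giving a path from $\Omega $ to $\Omega _1$ using no edge of $W$, contrary to the choice of $W$; hence $\Omega _1 \cap S = \emptyset $. I would then define $f$ on $\vertexset $ to be $\mathbf{1}_S$, the value $1$ at vertices in $S$ and $0$ elsewhere, so that $0 \le f \le 1$ is automatic. For any edge $[u,v] \notin W$ the endpoints satisfy $U_W(u) = U_W(v)$, so both lie in $S$ or both in $S^c$ and $f(u) = f(v)$; thus $f$ changes value only across edges of the finite set $W$, which is precisely the defining condition for $f \in \alg $. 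The continuous extension of $f$ to $\graphbar $ then coincides with $\mathbf{1}_S$ on the clopen set $S$, yielding $f \equiv 1$ on $\Omega $ and $f \equiv 0$ on $\Omega _1$.

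The routine parts are the equivalence-relation properties of the $U_W$ partition and the identification of the continuous extension of $f$ with $\mathbf{1}_S$. The step deserving the most care, and the real crux, is the closedness of $S$: instead of trying to bound the number of classes generated by $W$, I would observe that $S$ and $S^c$ are each unions of the open classes $U_W(z)$, so both are open and $S$ is clopen directly. This is exactly the point at which the disjointness of $\Omega $ and $\Omega _1$ together with the path-blocking property of $W$ from \thmref{disconnect} do their work.
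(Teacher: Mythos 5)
Your proof is correct, but it takes a genuinely different route from the paper's. The paper proves \corref{sepsets} by a Stone--Weierstrass-style product construction: it applies \thmref{separate} pointwise, extracts a finite subcover of $\Omega$ to build, for each fixed $y \in \Omega_1$, a product $F_y = f_1\cdots f_N$ that vanishes on $\Omega$ and equals $1$ near $y$, then extracts a second finite subcover of $\Omega_1$ and takes $f = (1-F_1)\cdots(1-F_M)$, relying on the fact that $\alg$ is an algebra. You instead invoke \thmref{disconnect} --- which already packages the double compactness argument --- to get a single finite edge set $W$ blocking all paths from $\Omega$ to $\Omega_1$, and then take $f$ to be the indicator of the union $S$ of the classes $U_W(x)$, $x \in \Omega$. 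Your version is arguably cleaner: it runs the compactness argument once rather than twice, produces a genuinely $\{0,1\}$-valued $f$ directly (the paper's $f$ is also $\{0,1\}$-valued, but only after tracking the ranges through the products), and makes the clopen set $S$ separating the two compacta explicit. The one step you should tighten is the closedness of $S$: your argument that $S^c$ is a union of classes rests on transitivity of the relation ``joined by a path avoiding $W$,'' and concatenation of two rays at a common limit point in $\graphbar \setminus \graph$ is not literally a concatenation of vertex sequences (the paper glosses over the same point in \thmref{totally}, so this matches its level of rigor, but it is the weakest link). A way to avoid it entirely is to use compactness of $\Omega$ to replace $S$ by a finite union $U_W(x_1) \cup \dots \cup U_W(x_n)$ covering $\Omega$; this is clopen by \lemref{cloplem}, still misses $\Omega_1$ by the choice of $W$, and its indicator lies in $\alg$ because adjacent vertices joined by an edge outside $W$ always lie in the same $U_W(x_i)$ (here the concatenation happens at a vertex, where it is unproblematic). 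With that adjustment the argument is complete and fully rigorous.
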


\begin{proof}
First fix $y \in \Omega _1$.
Using \thmref{separate}, find a finite cover $U_1,\dots ,U_N$ of $\Omega $ 
by open sets with corresponding functions $f_1,\dots ,f_N$ which satisfy
$f_n(z) = 1$ for all $z$ in some open neighborhood $V_y$ of $y$, while 
$f_n(z) = 0$ for all $z$ in $U_n$.  

Define $F_y = f_1\cdots f_N$.
Find a finite collection $F_1,\dots, F_N$ whose corresponding open sets
$V_1,\dots ,V_N$ cover $\Omega _1$.  The function 
$f = (1-F_1)\cdots (1-F_N)$ has the desired properties.
\end{proof}

The combination of \thmref{separate} and the Stone-Weierstrass Theorem yields the next result.

\begin{thm} \label{Stone}
If $\graphbar $ is weakly connected and compact, then $\alg $ is uniformly
dense in the continuous functions on $\graphbar $.
\end{thm}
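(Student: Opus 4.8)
The plan is to invoke the real Stone--Weierstrass theorem, for which I would verify that the continuous extensions of the functions in $\alg$ form a point-separating subalgebra of $C(\graphbar)$ containing the constants, and that $\graphbar$ satisfies the required topological hypotheses. The latter is immediate: $\graphbar$ is a metric space, hence Hausdorff, and compactness is assumed.

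Next I would confirm the algebraic structure on $\vertexset$. For $\phi,\psi \in \alg$ with respective finite edge sets $W_\phi$ and $W_\psi$ on which the functions change across an edge, the sum $\phi+\psi$, the product $\phi\psi$, and any scalar multiple $c\phi$ are nonconstant only on edges in $W_\phi \cup W_\psi$ (respectively $W_\phi$), which is finite; so $\alg$ is a real subalgebra of functions on $\vertexset$. The constant functions lie in $\alg$, since their associated edge set is empty. To pass to $\graphbar$, I would use that $\vertexset$ is dense in $\graphbar$ and that each $\phi \in \alg$ extends continuously by the earlier lemma: two continuous functions on $\graphbar$ agreeing on the dense set $\vertexset$ must coincide, so the extension of $\phi+\psi$ equals the sum of the extensions, and likewise for products. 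Thus the extended family $\alg \subset C(\graphbar)$ is again a subalgebra containing the constants.

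The one substantive ingredient is separation of points, and this is precisely \thmref{separate}: for distinct $x,y \in \graphbar$ there is $\phi \in \alg$ with $\phi(x)=0$ and $\phi(y)=1$, so $\alg$ separates points. With separation, the constants, and the compact Hausdorff hypothesis all in hand, the Stone--Weierstrass theorem yields that $\alg$ is uniformly dense in $C(\graphbar)$. I expect no genuine obstacle here: the real content is carried entirely by \thmref{separate}, which is already established, and the remaining verifications are routine. The only point requiring a moment's care is checking that continuous extension commutes with the algebra operations, and that follows at once from the density of $\vertexset$ in $\graphbar$.
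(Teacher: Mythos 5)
Your proposal is correct and follows exactly the route the paper takes: the paper's proof consists of the single observation that Theorem~\ref{separate} combined with the Stone--Weierstrass theorem gives the result. The routine verifications you supply (that $\alg$ is a subalgebra containing the constants and that continuous extension respects the algebra operations) are left implicit in the paper but are exactly the right details to check.
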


\section{Quadratic forms}

\subsection{Weights and forms}

Vertex weights and the corresponding measure are now added to the edge weighted graph $\graph $.
A vertex weight function $\wt : \vertexset \to (0,\infty)$ provides the Borel measure 
$\mu (U) = \sum_{v \in U} \wt (v)$ on $\graph $, which may be extended \cite[p. 257]{Royden}
to $\graphbar $ by defining the measure of $\graphbar \setminus \graph $ to be $0$.  
The real Hilbert space $l^2(\wt )$ will consist of functions 
$f:\vertexset \to \real $ with $\sum_{v \in \vertexset} f(v)^2 \wt (v) < \infty $, 
and inner product $\langle f,g \rangle _{\wt} = \sum_{v \in \vertexset} f(v)g(v)\wt (v)$.  
The set of functions $f:\vertexset \to \real $ which are $0$ 
at all but finitely many vertices is denoted by $\domain _K$.
Also introduce $\domain _{\alg ,\wt } = \alg  \cap l^2(\wt )$.

The next proposition collects basic facts about the symmetric bilinear forms 
induced by the edge conductances $C(u,v)$.
Closely related results using the smaller domain $\domain _K$
are in \cite[p. 20]{Davies} and \cite{KL2}.

\begin{prop} \label{bform}
Suppose $C:\vertexset \times \vertexset \to [0,\infty) $ satisfies
$C(u,v) = C(v,u)$, with $C(u,v) > 0$ if and only if $[u,v] \in \edgeset$.  
Given a vertex weight $\wt $, the symmetric bilinear form
\begin{equation} \label{bformdef}
B(f,g) = \frac{1}{2}\sum_{v \in \vertexset} \sum_{u \sim v} C(u,v)(f(v) - f(u))(g(v) - g(u)),
\quad f,g \in \domain _{\alg ,\wt}, 
\end{equation}
has a nonnegative quadratic form $B(f,f)$, and satisfies
\begin{equation} \label{symmetry}
B(f,g) = \langle \Delta _{\wt} f,g \rangle _{\wt} = \langle f, \Delta _{\wt}g \rangle _{\wt},
\end{equation}
where
\begin{equation} \label{Laplace}
\Delta _{\wt}f(v) = \frac{1}{\wt (v)}\sum_{u \sim v} C(u,v) (f(v) - f(u)).
\end{equation}

\end{prop}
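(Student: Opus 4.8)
The plan is to read both claims as identities between \emph{finite} sums, the essential point being that membership in $\alg$ terminates every sum that appears. For $f,g \in \domain_{\alg,\wt}$ let $W_f$ and $W_g$ be the finite sets of edges on which $f$, respectively $g$, is non-constant. In \eqref{bformdef} the summand $C(u,v)(f(v)-f(u))(g(v)-g(u))$ vanishes unless $[u,v]$ lies in $W_f \cap W_g$, so $B(f,g)$ is a finite sum and is unconditionally defined. Taking $g = f$ gives $B(f,f) = \tfrac12 \sum_v \sum_{u \sim v} C(u,v)(f(v)-f(u))^2$, and since $C \ge 0$ every term is nonnegative, establishing $B(f,f) \ge 0$. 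Because $\graph$ is locally finite the inner sum in \eqref{Laplace} is finite, and $\Delta_\wt f(v) = 0$ unless $v$ is an endpoint of an edge of $W_f$; thus $\Delta_\wt f$ is finitely supported, so $\Delta_\wt f \in \domain_K \subset l^2(\wt)$ and $\langle \Delta_\wt f, g \rangle_\wt$ is likewise a finite sum.

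To prove \eqref{symmetry} I would carry out a discrete Green's identity. Cancelling $\wt(v)$ in the definition of the inner product gives
\[ S := \langle \Delta_\wt f, g \rangle_\wt = \sum_{v \in \vertexset} \sum_{u \sim v} C(u,v)(f(v)-f(u))g(v), \]
a finite sum over ordered pairs of adjacent vertices. The key step is the involution exchanging the two endpoints of each such pair: relabelling the bound variables $u \leftrightarrow v$ and using $C(u,v) = C(v,u)$ produces the alternative representation
\[ S = \sum_{v \in \vertexset} \sum_{u \sim v} C(u,v)(f(u)-f(v))g(u). \]
Adding the two expressions for $S$ and collecting terms,
\[ 2S = \sum_{v \in \vertexset} \sum_{u \sim v} C(u,v)(f(v)-f(u))(g(v)-g(u)) = 2B(f,g), \]
so $\langle \Delta_\wt f, g \rangle_\wt = B(f,g)$. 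Since $B(f,g) = B(g,f)$ by inspection of \eqref{bformdef}, applying the same computation with $f$ and $g$ interchanged yields $B(f,g) = \langle \Delta_\wt g, f \rangle_\wt = \langle f, \Delta_\wt g \rangle_\wt$, completing \eqref{symmetry}.

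I expect no serious analytic obstacle here: the only point requiring care is the legitimacy of the rearrangement in the averaging step, and this is precisely what the domain $\domain_{\alg,\wt}$ is designed to supply. Restricting to $\alg$, rather than working on all of $l^2(\wt)$, forces $W_f$, $W_g$, and the support of $\Delta_\wt f$ to be finite, so every manipulation above is a finite rearrangement and no question of conditional convergence arises. The local finiteness of $\graph$ enters only to guarantee that $\Delta_\wt f(v)$ is itself a finite sum at each vertex.
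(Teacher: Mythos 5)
Your proof is correct and follows essentially the same route as the paper: both arguments reduce \eqref{symmetry} to a finite reindexing of the double sum over ordered adjacent pairs (equivalently, a sum over edges), using the symmetry $C(u,v)=C(v,u)$, with membership in $\alg$ guaranteeing that only finitely many terms are nonzero. The paper expands $2B(f,g)$ and converts the cross term into an edge sum, whereas you symmetrize $\langle \Delta_{\wt}f,g\rangle_{\wt}$ directly; this is the same computation read in the opposite direction.
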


\begin{proof}

The nonnegativity of the quadratic form is immediate from the definition.
Note that for any $f \in \alg $ there are only finitely many vertices $v \in \vertexset $ 
for which $f(v) - f(u)$ is nonzero if $u$ is adjacent to $v$.

To identify the operator $\Delta _{\wt}$, start with
\begin{equation} \label{compform}
2B(f,g) 
 = \sum_{v \in \vertexset} g(v) \sum_{u \sim v} C(u,v) \Bigl ( f(v) - f(u) \Bigr )
\end{equation}
\[ - \sum_{v \in \vertexset} \Bigl ( \sum_{u \sim v} C(u,v) g(u)(f(v) - f(u)) \Bigr )\]
Suppose a graph edge $e$ has vertices $v_1(e)$ and $v_2(e)$.  
The second sum over $v \in \vertexset$ in \eqref{compform} can be viewed as a 
sum over edges, with each edge contributing the terms
$C(v_1,v_2)g(v_1)(f(v_2) - f(v_1))$ and $C(v_1,v_2)g(v_2)(f(v_1) - f(v_2))$.
Using this observation to change the order of summation gives
\[ \sum_{v \in \vertexset} \Bigl ( \sum_{u \sim v} C(u,v) g(u)(f(v) - f(u)) \Bigr )\]
\[ = \sum_{e \in \edgeset} C(v_1(e),v_2(e))
\Bigl (g(v_1)(f(v_2) - f(v_1)) + g(v_2)(f(v_1) - f(v_2)) \Bigr )\]
\[ = \sum_{u \in \vertexset} g(u) \sum_{v \sim u} C(u,v) (f(v) - f(u))  \]
Employing this identity in \eqref{compform} gives
\begin{equation} \label{formform}
2B(f,g) 
= 2 \sum_v \wt (v) g(v) [\frac{1}{\wt (v)}\sum_{u \sim v} C(u,v) (f(v) - f(u))].
\end{equation}

\end{proof}

With respect to the standard basis consisting of functions $\delta _w: \vertexset \to \real $
with $\delta _w(w) = 1$ and $\delta _w(v) = 0$ for $v \not= w$, 
the operators $\Delta _{\wt}$ have the matrix representation 
\[Q(v,w) = \Bigl \{ \begin{matrix} \wt ^{-1}(w) \sum_{u \sim w} C(u,w), & v = w \cr 
-\wt ^{-1}(v) C(v,w), & v \sim w \cr 
0, & {\rm otherwise } \end{matrix} \Bigr \},
\quad v,w \in \vertexset . \]
If $v$ is fixed, then summing on $w$ gives
\[\sum_w Q(v,w) = \frac{1}{\wt (v)} \sum_{u \sim v} C(u,v) 
- \frac{1}{\wt (v)} \sum_{u \sim v} C(u,v) = 0,\]
so $-Q(v,w)$ is a $Q$-matrix in the sense of Markov chains \cite[p. 58]{Liggett}.

In the $Q$ - matrix formulation the matrix entries represent transition rates,
so decreasing the vertex measure $\wt $ increases the rates. 
Consistent with the boundary value themes of this work an interesting choice is 
to take the vertex weight $\wt _0(v)$ to be half the sum 
of the lengths of the incident edges,
\[\wt _0(v) = \frac{1}{2}\sum_{u \sim v} R(u,v) .\]
This choice makes the vertex measure consistent with the previously defined
graph volume,
\[\wt _0(\graph ) = \sum_{e \in \edgeset} R(e) = {\rm vol (\graph )} .\]
If $\rm vol (\graph ) < \infty $, then $l^2(\wt _0)$ will include all functions in $\alg$. 

With respect to this measure 
\[\Delta _{\wt _0 }f(v) = \wt _0^{-1}(v)\sum_{u \sim v} C(u,v) (f(v) - f(u))
= \frac{2}{\sum_{u \sim v} R(u,v)} \sum_{u \sim v} \frac{f(v) - f(u)}{R(u,v)}.\]
This operator resembles the symmetric second difference operator from 
numerical analysis.  Like the classical Laplace operator in Euclidean space, 
$\Delta _{\wt _0}$ exhibits quadratic scaling behavior.  
Suppose $v_0,v_1 \in \vertexset $ have equal numbers
of incident edges, and there is a bijection of vertex neighborhoods   
with $R(v_1,w_1) = \rho R(v_0,w_0)$ for $\rho > 0$, and  $w_j \sim v_j$.  
If $f:\vertexset \to \real $ satisfies $f(v_1) = f(v_0)$ and 
$f(w_1) = f(w_0)$, then
\[\Delta _{\wt _0}f(v_1) = \rho ^{-2}\Delta _{\wt _0}f(v_0) .\]

The vertex weight $\wt _0$ is typically distinct from 
$\wt (v) = \sum_{u \sim v} C(u,v)$, a choice 
which appears in the study of discrete time Markov chains
\cite[p. 40]{Doyle}, \cite[p. 73]{Liggett}, \cite[p. 18]{Lyons} 
with transition probabilities $p(u,v) = \wt ^{-1}(v) C(u,v)$ for $ u \not= v$.

\subsection{Continuity}

The next result considers continuous extension of functions to $\graphbar $ 
when the quadratic form is finite. 
 
\begin{thm} \label{formcont}
Suppose $\graph $ is connected. 
Using the metric of \eqref{metricdef}, 
functions $f:\vertexset \to \real$ with $B(f,f) < \infty $
are uniformly continuous on $\graph $, and 
so $f$ extends uniquely to a continuous function on $\graphbar $.   
\end{thm}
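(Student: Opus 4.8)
The plan is to establish a single H\"older-type estimate,
\[
|f(u) - f(v)| \le \sqrt{B(f,f)}\,\sqrt{d(u,v)}, \qquad u,v \in \vertexset,
\]
from which uniform continuity is immediate. The starting observation is that in the double sum defining $B(f,f)$ each edge $[u,v]$ is counted twice (once with outer index $u$, once with outer index $v$), so that the factor $\tfrac12$ reduces it to a sum over edges,
\[
B(f,f) = \sum_{[u,v] \in \edgeset} \cond(u,v)\bigl(f(u) - f(v)\bigr)^2 .
\]

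First I would fix distinct vertices $u,v$ and a finite simple path $\gamma = (u = v_0, v_1, \dots, v_K = v)$ joining them. Writing $f(u) - f(v)$ as the telescoping sum $\sum_{k=0}^{K-1}\bigl(f(v_k) - f(v_{k+1})\bigr)$ and applying the triangle inequality, I split each summand using $\cond = 1/\rest$ as $|f(v_k) - f(v_{k+1})| = \sqrt{\rest(v_k,v_{k+1})}\,\sqrt{\cond(v_k,v_{k+1})}\,|f(v_k) - f(v_{k+1})|$. The Cauchy--Schwarz inequality then yields
\[
|f(u) - f(v)| \le \Bigl(\sum_{k} \rest(v_k,v_{k+1})\Bigr)^{1/2} \Bigl(\sum_{k} \cond(v_k,v_{k+1})\bigl(f(v_k) - f(v_{k+1})\bigr)^2\Bigr)^{1/2}.
\]
The first factor is the square root of the length of $\gamma$. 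Because $\gamma$ is a simple path, its edges are distinct, so the edge terms in the second factor form a subfamily of those summed in $B(f,f)$, whence the second factor is bounded by $\sqrt{B(f,f)}$, uniformly in $\gamma$.

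Next I would pass to the infimum. The metric $d(u,v)$ in \eqref{metricdef} is defined as an infimum over finite paths, which may repeat vertices; but deleting any closed loop from a walk only shortens it, so the infimum over all walks agrees with the infimum over simple paths. Since only the length factor above depends on $\gamma$, taking the infimum gives $|f(u) - f(v)| \le \sqrt{d(u,v)}\,\sqrt{B(f,f)}$. As $B(f,f) < \infty$, for any $\epsilon > 0$ the condition $d(u,v) < \epsilon^2 / B(f,f)$ forces $|f(u) - f(v)| < \epsilon$, so $f$ is uniformly continuous on $\graph$. A uniformly continuous real-valued function on a metric space extends uniquely to a continuous function on the completion $\graphbar$, which finishes the proof.

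The hard part will be the passage from arbitrary walks to simple paths: it is needed precisely so that the edge contributions in the Cauchy--Schwarz bound are not double-counted and may legitimately be dominated by $B(f,f)$. The Cauchy--Schwarz splitting itself and the standard extension to the completion are routine.
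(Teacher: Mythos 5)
Your proposal is correct and follows essentially the same route as the paper: a telescoping sum along a finite simple path, the Cauchy--Schwarz inequality with the splitting $1 = \rest^{1/2}\cond^{1/2}$, domination of the edge sum by $B(f,f)$, and comparison of path length with $d(u,v)$ before invoking the standard extension to the completion. The only (harmless) differences are that you obtain the sharper constant $|f(u)-f(v)|^2 \le B(f,f)\,d(u,v)$ by taking the infimum over simple paths directly, where the paper settles for a simple path of length at most $2d(u,v)$ and the bound $4B(f,f)\,d(u,v)$.
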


\begin{proof}
If $v,w \in \vertexset  $ and $\gamma = (v=v_0,v_1,\dots ,v_K = w) $ 
is any finite simple path from $v$ to $w$, then the Cauchy-Schwarz inequality gives
\[|f(w) - f(v)|^2 
 = \Big | \sum_{k} [f(v_{k+1}) - f(v_{k})] 
\frac{C^{1/2}(v_{k+1},v_k)}{C^{1/2}(v_{k+1},v_k)} \Big |^2 \]
\[ \le \sum_{k} [C(v_{k+1},v_k)(f(v_{k+1}) - f(v_{k}))^2] 
\sum_k R(v_{k+1},v_k) \]
\[ \le 2B(f,f) \sum_k R(v_{k+1},v_k). \]
There is a simple path with $\sum_k R(v_{k+1},v_k) \le 2d(v,w)$, so
\begin{equation} \label{formest}
|f(w) - f(v)|^2 \le 4B(f,f)d(v,w), 
\end{equation}
which shows $f$ is uniformly continuous on $\graph $.
By \cite[p. 149]{Royden} $f$ extends continuously to $\graphbar $.   

\end{proof}

The bilinear form may be used to define a 'Sobolev style'
Hilbert space $H^1(\wt )$ with inner product 
\[\langle f,g \rangle _{\wt ,1} = \sum_v f(v)g(v) \wt (v) + B(f,g).\]
Let $H^1_0(\wt)$ be the closure of $\domain _K$ in $H^1(\wt)$.

\begin{lem} \label{h1lem}
If $\graph $ is connected with finite diameter, then there is a constant $C$
such that 
\begin{equation} \label{conteval}
\sup _{v \in \vertexset} |f(v)| \le C\| f \| _{\wt ,1},
\end{equation}
so a Cauchy sequence in $H^1(\wt )$ is a uniform Cauchy sequence.   The functions
$f$ in the unit ball of $H^1$ are uniformly equicontinuous \cite[p. 29]{Royden}.
\end{lem}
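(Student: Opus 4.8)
The plan is to anchor the pointwise values of $f$ at a single fixed vertex, where the $l^2(\wt)$ part of the norm controls the size of $f$, and then to propagate this control to every other vertex using the path estimate already established in \thmref{formcont}. First I would fix once and for all a vertex $v_0 \in \vertexset$. Since $\|f\|_{\wt,1}^2 = \sum_{v} f(v)^2\wt(v) + B(f,f) \ge f(v_0)^2\,\wt(v_0)$, dropping all terms but the one indexed by $v_0$ yields the anchoring bound $|f(v_0)| \le \wt(v_0)^{-1/2}\,\|f\|_{\wt,1}$.

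Next I would invoke the estimate \eqref{formest} from \thmref{formcont}, namely $|f(w) - f(v_0)|^2 \le 4B(f,f)\,d(v_0,w)$. Writing $D = \sup_{u,v \in \vertexset} d(u,v)$ for the finite diameter, and using $B(f,f) \le \|f\|_{\wt,1}^2$, this gives $|f(w) - f(v_0)| \le 2\sqrt{D}\,\|f\|_{\wt,1}$ uniformly in $w$. The triangle inequality $|f(w)| \le |f(v_0)| + |f(w) - f(v_0)|$ then yields \eqref{conteval} with the explicit constant $C = \wt(v_0)^{-1/2} + 2\sqrt{D}$, which depends only on the chosen anchor $v_0$ and on the diameter, not on $f$.

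The two remaining assertions follow quickly. Applying \eqref{conteval} to the difference $f_n - f_m$ of a Cauchy sequence in $H^1(\wt)$ gives $\sup_v |f_n(v) - f_m(v)| \le C\,\|f_n - f_m\|_{\wt,1} \to 0$, so the sequence is uniformly Cauchy. For equicontinuity I would use \eqref{formest} directly: when $\|f\|_{\wt,1} \le 1$ we have $B(f,f) \le 1$, hence $|f(w) - f(v)| \le 2\sqrt{d(v,w)}$, a modulus of continuity independent of $f$. Given $\epsilon > 0$, choosing $\delta = (\epsilon/2)^2$ forces $|f(w) - f(v)| < \epsilon$ whenever $d(v,w) < \delta$, simultaneously for all $f$ in the unit ball; since both sides are continuous, this modulus persists under the continuous extension to $\graphbar$ supplied by \thmref{formcont}.

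I do not expect a serious obstacle, since the lemma is essentially a repackaging of \thmref{formcont}. The one point requiring care, and the conceptual heart of the argument, is recognizing that the seminorm $\sqrt{B(f,f)}$ controls only \emph{differences} of values, so a constant function is invisible to it; the finite-diameter hypothesis is exactly what makes the difference estimate uniform over all $w$, while the $l^2(\wt)$ term of the full norm is what pins down the absolute level of $f$ at the anchor $v_0$. Choosing $v_0$ arbitrarily is harmless, as only the existence of \emph{some} constant $C$ is claimed.
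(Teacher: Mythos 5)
Your argument is correct and is essentially identical to the paper's proof: the paper also fixes an anchor vertex $v_0$, bounds $|f(v_0)|$ by $\|f\|_{\wt,1}/\sqrt{\wt(v_0)}$ using the $l^2(\wt)$ part of the norm, and controls $|f(v)-f(v_0)|$ by $2\|f\|_{\wt,1}\,{\rm diam}(\graph)^{1/2}$ via \eqref{formest}, with the equicontinuity read off from \eqref{formest} in the same way. No differences worth noting.
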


\begin{proof}
Fixing a vertex $v_0$, \eqref{formest} gives
\[|f(v)| \le |f(v_0)| + |f(v) - f(v_0)|  \le
\| f \| _{\wt ,1} /\sqrt{\wt (v_0)} + 2\| f \| _{\wt ,1} {\rm diam}(\graph )^{1/2},\]
which is \eqref{conteval}.  The uniform equicontinuity follows  
from \eqref{formest}.
\end{proof}

\begin{thm} \label{zerobnd}
Suppose $\graph $ is connected and has finite diameter.
If $f \in H^1_0(\wt )$, then $f$ has a unique continuous extension to $\graphbar$
which is zero at all points $x \in \graphbar \setminus \graph $. 
\end{thm}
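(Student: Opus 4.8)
The plan is to exploit the uniform convergence furnished by \lemref{h1lem} together with the fact that finitely supported functions already vanish on the boundary in the relevant sense. First I would dispose of the easy half: since $f \in H^1_0(\wt) \subset H^1(\wt)$ we have $B(f,f) < \infty$, so \thmref{formcont} immediately produces a continuous extension of $f$ to $\graphbar$, and uniqueness of this extension follows at once from the density of $\graph$ in its completion $\graphbar$. Thus only the vanishing of the extension on $\graphbar \setminus \graph$ remains to be established.

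By the definition of $H^1_0(\wt)$ as the closure of $\domain _K$ in $H^1(\wt)$, there is a sequence $f_n \in \domain _K$ with $f_n \to f$ in $H^1(\wt)$, and \lemref{h1lem} converts this into uniform convergence of $f_n$ to $f$ on $\graph$. The core of the argument is to understand the continuous extensions of the individual $f_n$. Each $f_n$ has finite support; since $\graph $ is locally finite, $f_n$ differs from a constant across only finitely many edges, so $f_n \in \alg $ and extends continuously to $\graphbar$.

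I would then show that this extension vanishes on $\graphbar \setminus \graph$. Fix $x \in \graphbar \setminus \graph$ and a sequence of vertices $x_k \to x$. The key observation---which I expect to be the main obstacle to state cleanly---is that, by local finiteness, a Cauchy sequence of vertices that does not converge to a vertex must eventually leave every finite set of vertices: distinct vertices are separated by a positive distance (at least the minimum incident edge length), so such a Cauchy sequence is either eventually constant, converging to a vertex, or else leaves each finite vertex set permanently. Since $x \notin \graph$, the latter holds, so $x_k$ eventually exits the support of $f_n$, forcing $f_n(x_k) = 0$ for large $k$ and hence $f_n(x) = 0$.

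Finally I would pass to the limit. Because $\graph $ is dense in $\graphbar$ and each extended $f_n$ is continuous, uniform convergence of $\{ f_n \}$ on $\graph$ propagates to uniform convergence on all of $\graphbar$; the limit is therefore a continuous function on $\graphbar$ agreeing with $f$ on $\graph$, and, since every extended $f_n$ vanishes on $\graphbar \setminus \graph$, the limit vanishes there as well. By the uniqueness already noted, this limit is precisely the continuous extension of $f$, which completes the proof.
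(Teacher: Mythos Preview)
Your proposal is correct and follows essentially the same approach as the paper's own proof: approximate $f$ by finitely supported $f_n$, invoke \lemref{h1lem} to upgrade $H^1$-convergence to uniform convergence, and pass to the limit on $\graphbar$. The paper simply asserts that the extended $f_n$ vanish on $\graphbar \setminus \graph$, whereas you supply the justification via the Cauchy-sequence argument; otherwise the arguments are the same.
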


\begin{proof}
Any function $f \in H^1_0(\wt )$ is the limit in $H^1(\wt )$ of a 
sequence $f_n$ from $\domain _K$.  The functions $f$ and $f_n$ have unique continuous 
extensions to $\graphbar$ by \thmref{formcont}.  The extended functions $f_n$ 
satisfying $f_n(x) = 0$ for all $x \in \graphbar \setminus \graph $. 
By \lemref{h1lem} the sequence $f_n$ converges to $f$ uniformly on $\graph $, 
so the extensions $f_n$ converge uniformly to the extension $f$ on $\graphbar$.  
Thus $f(x) = 0$ for all $x \in \graphbar \setminus \graph $. 
\end{proof}

\section{Boundary value problems}

\subsection{The basic Laplacian}

Let $S_{K,\wt }$ denote the operator 
$\Delta _{\wt}$ on $l^2(\wt )$ with the domain $\domain _K$.  

\begin{prop} \label{Lapdef}
The operator $S_{K,\wt}$ is symmetric and nonnegative on $l^2(\wt)$.
The adjoint operator $S_{K,\wt}^*$ on $l^2(\wt )$ acts by
\[(S_{K,\wt}^*h)(v) = \Delta _{\wt }h(v) = \frac{1}{\wt (v)} \sum_{u \sim v} C(u,v) (h(v) - h(u))\]
on the domain consisting of all $h \in l^2(\wt )$ for which 
$\Delta _{\wt }h \in l^2(\wt )$.  
\end{prop}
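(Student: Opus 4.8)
The plan is to dispatch symmetry and nonnegativity directly from \propref{bform}, then identify the adjoint through a summation-by-parts identity that is localized to finitely many edges by the finite support condition.

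For the first assertion I would first observe that every finitely supported function lies in $\domain _{\alg ,\wt }$: since $\graph $ is locally finite, a function vanishing off a finite set of vertices changes value across only finitely many edges, so $\domain _K \subset \alg \cap l^2(\wt ) = \domain _{\alg ,\wt }$. Consequently \propref{bform}, and in particular the identity \eqref{symmetry}, applies verbatim to $f,g \in \domain _K$, yielding $\langle S_{K,\wt }f,g \rangle _{\wt } = B(f,g) = \langle f, S_{K,\wt }g \rangle _{\wt }$, which is symmetry, and $\langle S_{K,\wt }f,f \rangle _{\wt } = B(f,f) \ge 0$, which is nonnegativity.

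For the adjoint I would prove the two inclusions separately, both resting on the pointwise identity
\[ \langle \Delta _{\wt }f, h \rangle _{\wt } = \langle f, \Delta _{\wt }h \rangle _{\wt }, \qquad f \in \domain _K, \ h \in l^2(\wt ). \]
Although \propref{bform} is stated with both arguments in $\domain _{\alg ,\wt }$, when $f \in \domain _K$ the only edges contributing to $B(f,h)$ are the finitely many edges incident to the support of $f$; thus each of $\langle \Delta _{\wt }f, h \rangle _{\wt }$, $B(f,h)$, and $\langle f, \Delta _{\wt }h \rangle _{\wt }$ is a finite sum, and the edge-by-edge reorganization carried out in the proof of \propref{bform} (the passage through \eqref{compform}) is a purely algebraic rearrangement of finitely many terms, valid for any $h$ whatsoever. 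This establishes the displayed identity. Given it, the first inclusion is immediate: if $h \in l^2(\wt )$ with $\Delta _{\wt }h \in l^2(\wt )$, then $\langle S_{K,\wt }f, h \rangle _{\wt } = \langle f, \Delta _{\wt }h \rangle _{\wt }$ for all $f \in \domain _K$, so $h$ lies in the domain of $S_{K,\wt }^*$ with $S_{K,\wt }^* h = \Delta _{\wt }h$.

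For the converse, suppose $h \in \domain (S_{K,\wt }^*)$ with $S_{K,\wt }^* h = h^* \in l^2(\wt )$. Testing against the basis function $\delta _w \in \domain _K$ and using the definition of the adjoint together with the identity gives $\wt (w) h^*(w) = \langle \delta _w, h^* \rangle _{\wt } = \langle \Delta _{\wt }\delta _w, h \rangle _{\wt } = \langle \delta _w, \Delta _{\wt }h \rangle _{\wt } = \wt (w) \Delta _{\wt }h(w)$, so $h^*(w) = \Delta _{\wt }h(w)$ at every vertex $w$. Hence $\Delta _{\wt }h = h^* \in l^2(\wt )$, which shows the two descriptions of the domain coincide and that $S_{K,\wt }^*$ acts as claimed. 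The only genuine subtlety, and the step I would be most careful about, is the justification that the summation-by-parts identity persists for $h \notin \alg $; this is exactly where finiteness of the support of $f$ together with local finiteness of $\graph $ is used, and it is what prevents any convergence question from arising.
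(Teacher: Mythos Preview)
Your proof is correct and follows essentially the same approach as the paper. Both deduce symmetry and nonnegativity from \propref{bform}, and both identify the adjoint by testing against the basis functions $\delta _w$ (the paper uses the normalized $f_v = \wt (v)^{-1}\delta _v$); the paper carries out the pairing $\langle S_{K,\wt}f_v,h\rangle _{\wt}$ by a direct two-line computation rather than first isolating the summation-by-parts identity $\langle \Delta _{\wt}f,h\rangle _{\wt} = \langle f,\Delta _{\wt}h\rangle _{\wt}$ as you do, and it leaves the reverse inclusion (that $\Delta _{\wt}h \in l^2(\wt )$ implies $h \in \domain (S_{K,\wt}^*)$) implicit, whereas you spell it out.
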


\begin{proof}
The symmetry and nonnegativity of $S_{K,\wt}$ are given by \eqref{symmetry}.  
Since $S_{K,\wt}$ is densely defined, $S_{K,\wt}^*$ is the operator
whose graph is the set of pairs 
$(h,k) \in l^2(\wt ) \oplus l^2(\wt )$ 
such that
\[\langle S_{K,\wt}f,h \rangle _{\wt } =  \langle f,k \rangle _{\wt } \]
for all $f \in \domain _K$.  Suppose $f_v = \frac{1}{\wt (v)}\delta _v$
Then for any $h$ in the domain of $S_{K,\wt}^*$,
\[k(v) = (S_{K,\wt}^*h)(v) = \langle S_{K,\wt}f_v,h \rangle _{\wt } 
 = \sum _w [\sum_{u \sim w} C(u,w) (f_v(w) - f_v(u))]h(w)\] 
\[ = \frac{1}{\wt (v)} \sum_{u \sim v} C(u,v) (h(v) - h(u)).\]

\end{proof}

\propref{Lapdef} provides a basic Laplace operator,
the Friedrich's extension \cite[pp. 322-326]{Kato} of $S_{K,\wt}$,
whose domain is a subset of $H^1_0(\wt)$ the closure of 
$\domain _K$ in $H^1(\wt)$.
Let $\dop _{K,\wt}$ denote the Friedrich's extension of $S_{K,\wt }$.
Several features of $\dop _{K,\wt}$ are implied by the condition
$\wt (\graph ) < \infty $.  

\begin{prop} \label{lbnd}
Suppose $\graph $ is connected, with finite diameter and infinitely many vertices. 
If $\wt (\graph ) < \infty $, $f \in {\rm domain}(\dop _K)$, and
$\| f \| _{\wt } = 1$,  
then $\dop _{K,\wt}$ has the strictly positive lower bound
\begin{equation}
\langle \dop _{K,\wt} f,f \rangle _{\wt } = B(f,f) \ge \frac{1}{4\wt (\graph ) {\rm diam} (\graph)}, 
\end{equation}
\end{prop}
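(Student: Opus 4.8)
The plan is to derive the bound from the pointwise gradient estimate \eqref{formest}, namely $|f(w)-f(v)|^2 \le 4B(f,f)\,d(v,w)$, by integrating it against the vertex measure $\wt$. Because the Friedrichs extension has form domain $H^1_0(\wt )$ and satisfies $\langle \dop _{K,\wt} f, f \rangle _{\wt} = B(f,f)$ (as already recorded in the statement), it is enough to prove the estimate
\[ \| f \| _{\wt}^2 \le 4\, \wt (\graph )\, {\rm diam}(\graph )\, B(f,f) \]
for every $f$ in $H^1_0(\wt )$; setting $\| f \| _{\wt} = 1$ then returns the asserted lower bound.

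First I would establish this inequality on the core $\domain _K$, where the hypothesis of infinitely many vertices does the essential work. A function $f \in \domain _K$ is supported on finitely many vertices, so $f(w) = 0$ for some (in fact for all but finitely many) vertex $w$, and since ${\rm diam}(\graph ) < \infty$ every vertex $v$ obeys $d(v,w) \le {\rm diam}(\graph )$. Applying \eqref{formest} with this $w$ gives the uniform pointwise bound
\[ |f(v)|^2 = |f(v) - f(w)|^2 \le 4\,B(f,f)\,{\rm diam}(\graph ), \qquad v \in \vertexset . \]
Multiplying by $\wt (v)$, summing over $\vertexset $, and using $\wt (\graph ) = \sum_v \wt (v) < \infty $ then yields $\| f \|_{\wt}^2 \le 4\,\wt (\graph )\,{\rm diam}(\graph )\,B(f,f)$.

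It then remains to pass from $\domain _K$ to the full domain of $\dop _{K,\wt}$, which sits inside $H^1_0(\wt )$. Writing $f$ as an $H^1(\wt )$-limit of functions $f_n \in \domain _K$ and using $\| g \|_{\wt ,1}^2 = \| g \|_{\wt}^2 + B(g,g)$, convergence in the $H^1(\wt )$ norm forces both $\| f_n \|_{\wt} \to \| f \|_{\wt}$ and $B(f_n,f_n) \to B(f,f)$, so the inequality of the previous step survives in the limit. I expect the only point needing genuine care to be this continuity of both sides under the $H^1$ norm, together with the inclusion ${\rm domain}(\dop _{K,\wt}) \subset H^1_0(\wt )$ for the Friedrichs extension; everything else is the bookkeeping of summing a uniform bound against the finite measure $\wt $. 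It is worth noting that each hypothesis is used exactly once: finite diameter to make \eqref{formest} uniform in $v$, infinitely many vertices to locate a zero of each finitely supported $f$ (without which the constants would defeat any positive lower bound), and $\wt (\graph ) < \infty$ to carry out the summation.
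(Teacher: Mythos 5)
Your proof is correct and follows essentially the same route as the paper: reduce to $\domain _K$ via the Friedrichs extension, use the infinitude of vertices to find a zero of $f$, and apply \eqref{formest} together with the finite diameter and finite total measure. The only (cosmetic) difference is that you sum the uniform pointwise bound $|f(v)|^2 \le 4B(f,f)\,{\rm diam}(\graph )$ against $\wt $, whereas the paper instead selects a single vertex $v$ with $f^2(v) \ge \wt ^{-1}(\graph )$ and applies \eqref{formest} once; the two steps are interchangeable and yield the identical constant.
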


\begin{proof}
The Friedrich's extension $\dop _{K,\wt}$ of the nonnegative symmetric operator $S_{K,\wt}$
has the same lower bound, so it suffices to consider functions $f \in \domain _K$.
Since $\| f \| _{\wt} = 1$ there must be some vertex $v$ where $f^2(v) \ge \wt ^{-1}(\graph )$.
Since $f$ has finite support, there is another vertex $u$ with $f(u) = 0$.
An application of \eqref{formest} gives
\[\wt ^{-1}(\graph ) \le f^2(v) = [f(v) - f(u)]^2 \le 4B(f,f)d(u,v).\]
\end{proof}

\begin{prop}
Suppose $\graph $ is connected, $\graphbar $ is compact, and $\wt (\graph )$ is finite.
Let $S_{1,\wt}$ be a symmetric extension of $S_{K,\wt}$ in $l^2(\wt )$ 
whose associated quadratic form is 
\[ \langle S_{1,\wt}f,f \rangle _{\wt } = B(f,f).\]
Then the Friedrich's extension $\dop _{1,\wt}$ of $S_{1,\wt}$ has compact resolvent.
\end{prop}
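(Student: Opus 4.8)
The plan is to show that the form domain of $\dop_{1,\wt}$ embeds compactly into $l^2(\wt)$, which is the standard criterion for a nonnegative self adjoint operator to have compact resolvent. The Friedrichs extension $\dop_{1,\wt}$ has form domain $Q$ equal to the closure of ${\rm domain}(S_{1,\wt})$ in the $H^1(\wt)$ norm $\| \cdot \|_{\wt,1}$, so $Q$ is a closed subspace of $H^1(\wt)$ and $\langle (\dop_{1,\wt} + I) f, f \rangle_{\wt} = \| f \|_{\wt,1}^2$ for $f$ in the operator domain. Viewing $(\dop_{1,\wt} + I)^{-1}$ as the composition of the bounded map $l^2(\wt) \to Q$ (the resolvent with the form norm on its range) followed by the inclusion $Q \hookrightarrow l^2(\wt)$, the resolvent is compact once this inclusion is compact. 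Since a closed subspace of a compactly embedded space is itself compactly embedded, it suffices to prove that the inclusion $H^1(\wt) \hookrightarrow l^2(\wt)$ is compact.

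To establish compactness of this inclusion, I would take a sequence $f_n$ with $\| f_n \|_{\wt,1} \le 1$ and extract a subsequence converging in $l^2(\wt)$. Because $\graphbar$ is compact it has finite diameter, so \lemref{h1lem} applies: the $f_n$ satisfy $\sup_v |f_n(v)| \le C$ and are uniformly equicontinuous on $\graph$. By \thmref{formcont} each $f_n$ extends continuously to $\graphbar$, and the estimate \eqref{formest}, which here reads $|f_n(w) - f_n(v)|^2 \le 4 B(f_n,f_n) d(v,w) \le 4 d(v,w)$, passes to the completion by continuity, giving uniform equicontinuity of the extensions on the compact space $\graphbar$. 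The Arzel\`a--Ascoli theorem then furnishes a subsequence, still denoted $f_n$, converging uniformly on $\graphbar$.

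Finally I would convert uniform convergence into $l^2(\wt)$ convergence. For the uniformly convergent subsequence, $\| f_n - f_m \|_{\wt}^2 = \sum_{v \in \vertexset} (f_n(v) - f_m(v))^2 \wt(v) \le \bigl( \sup_v |f_n(v) - f_m(v)| \bigr)^2 \wt(\graph)$, and since $\wt(\graph) < \infty$ the right side tends to $0$; thus the subsequence is Cauchy, hence convergent, in $l^2(\wt)$. This proves the inclusion $H^1(\wt) \hookrightarrow l^2(\wt)$ is compact, and with it the compactness of the resolvent of $\dop_{1,\wt}$. The finiteness of $\wt(\graph)$ is used precisely at this last step, while the compactness of $\graphbar$ is what powers the Arzel\`a--Ascoli argument. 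I expect the main point requiring care to be the reduction of compact resolvent to the compact form domain embedding, together with checking that the equicontinuity estimate genuinely extends from the vertex set to all of $\graphbar$.
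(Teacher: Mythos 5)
Your argument is correct and is essentially the paper's proof: both reduce the claim to showing that a sequence bounded in the $H^1(\wt)$ norm has a subsequence converging in $l^2(\wt)$, using \lemref{h1lem} for uniform boundedness and equicontinuity, Arzel\`a--Ascoli on the compact completion for uniform convergence, and $\wt(\graph) < \infty$ to pass from uniform to $l^2(\wt)$ convergence. The only difference is presentational --- you phrase the reduction as compactness of the form-domain embedding, while the paper works directly with the image sequence $g_n = (\dop_{1,\wt} - \lambda I)^{-1} f_n$ --- and your version spells out details (extension of the equicontinuity estimate to $\graphbar$) that the paper leaves implicit.
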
 

\begin{proof}
The resolvent of $\dop _{1,\wt}$ maps a bounded set in $l^2(\wt )$ into a bounded set
in $H^1(\wt )$. Suppose $f_n$ is a bounded sequence in $l^2(\wt )$,
with $g_n = (\dop _1 - \lambda I)^{-1}f_n$.  
By \lemref{h1lem} and the Arzela-Ascoli Theorem \cite[p. 169]{Royden} the sequence $g_n$
has a uniformly convergent subsequence, which converges in $l^2(\wt )$.
\end{proof}

\subsection{Harmonic functions}

When considering harmonic functions, it will be convenient to treat vertices of 
degree one (boundary vertices) as part of the boundary of $\graphbar $.
For instance, this convention allows finite graphs with degree one vertices
to have nonconstant harmonic functions.  
Let $\vertexset _{int}$ denote the set of interior vertices, that is vertices
with degree at least $2$.
Say that a function $f:\vertexset \to \real$ is {\it harmonic on $\graph $} if 
\begin{equation} \label{harmdef}
f(v) = \frac{1}{\sum_{u \sim v}C(u,v) }\sum_{u \sim v} C(u,v) f(u)
\end{equation}
for all vertices $v \in \vertexset _{int}$.
Since the value $f(v)$ of a harmonic function is the weighted
average of the values at the adjacent vertices, $f$ has a minimum or maximum
at some $v \in \vertexset _{int}$ of a connected graph if and only if $f$ is constant.   
The set of harmonic functions is independent of the vertex weight $\wt$, 
but harmonic functions may be described as those for which
\[\Delta _{\wt }f(v) = \frac{1}{\wt (v)} \sum_{u \sim v} C(u,v) (f(v) - f(u)) = 0, \quad 
v \in \vertexset _{int} .\]

\begin{prop}
Suppose $\graph $ is connected and $\graphbar$ is compact. 
If $\graphbar \setminus \graph \not= \emptyset $, 
then $0$ is not an eigenvalue of an operator $\dop _{K,\wt}$.
\end{prop}

\begin{proof}
If $0$ were an eigenvalue of $\dop _{K,\wt}$, then there would be a corresponding
eigenfunction $f$ which is positive somewhere.
By \thmref{zerobnd}, $f$ would extend continuously to $\graphbar$, with
$f(x) = 0$ for all $x \in \graphbar \setminus \graph $. 
Since $\graphbar $ is compact, $f$ has a positive maximum at some $x \in \graph $.
Since \eqref{harmdef} holds at all vertices, $f$ is constant, but then
$\graphbar \setminus \graph \not= \emptyset $ implies $f = 0$. 
\end{proof}

Define $\partial \graphbar $ to be the complement of $\vertexset _{int}$ in $\graphbar $.  
The Dirichlet problem asks whether every continuous function 
$F: \partial \graphbar \to \real $ has a continuous extension $f:\graphbar \to \real$ 
which is harmonic on $\graph $.  The following example shows that 
the Dirichlet problem is not always solvable.

As shown in Figure A, construct a graph $\graph $ beginning with
a sequence of vertices $v_n$ and with edges $(v_n,v_{n+1})$, n = 0,1,2,\dots .
Take $R(v_n,v_{n+1}) = 2^{-n-1}$.
For each of the vertices $v_n$ introduce $2^n$ additional vertices $u_{n,k}$,
and edges $(v_n,u_{n,k})$ of length $1$. 
In this example, $\partial \graphbar$ consists of the boundary vertices $u_{n,k}$ 
together with the boundary point at $v = \lim_n v_n$.  
All of these points are isolated in $\partial \graphbar$.  

Suppose the function $F$ satisfies
\[ F(u_{n,k}) = 0, \quad F(v) = 1. \]
A computation with \eqref{harmdef} shows that any harmonic extension $f$ which is continuous
on $\graphbar$ must satisfy the contradictory requirements 
\[\lim_{n \to \infty} f(v_n) = 1, \quad
\lim_{n \to \infty} f(v_n) = 3/4.\]
 
\centerline{\includegraphics[height=2in,width=5in]{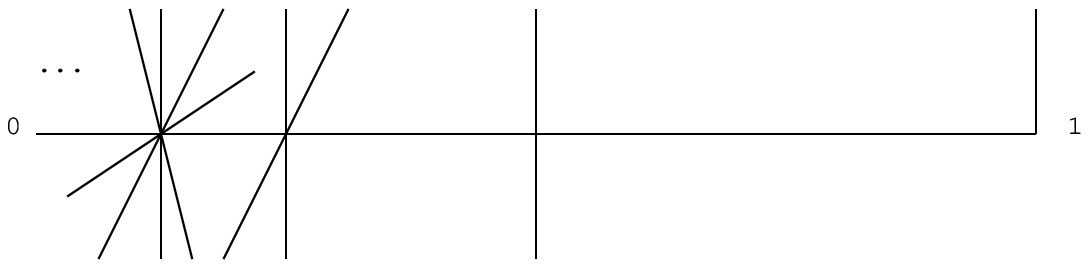}}
\centerline{Figure A} 

The graph $\graph $ in this example is connected with finite diameter, 
$\graphbar $ is weakly connected, but $\graphbar$ is not compact.
If $\graphbar$ is required to be compact, there is a positive result.

\begin{thm} \label{harm1}
Suppose $\graph $ is connected, while
$\graphbar$ is weakly connected and compact.  
Every continuous function $F:\partial \graphbar \to \real $ 
has a unique extension to a continuous function
$f: \graphbar \to \real $ that is harmonic on $\graph $. 
\end{thm}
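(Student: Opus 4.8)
The plan is to obtain uniqueness from the maximum principle and existence from a Dirichlet (energy minimization) principle combined with the Stone--Weierstrass approximation of \thmref{Stone}. First I record the maximum principle in the form needed: if $h$ is continuous on $\graphbar$ and harmonic on $\graph$, then $\sup_{\graphbar}|h| = \sup_{\partial \graphbar}|h|$. Indeed, $h$ attains its maximum on the compact space $\graphbar$; if this occurs at an interior vertex then, since \eqref{harmdef} exhibits $h(v)$ as a convex combination of the neighbouring values, $h$ is constant on the connected graph $\graph$ and hence on $\graphbar$, while otherwise the maximum is attained at a boundary vertex or at a point of $\graphbar \setminus \graph$, in either case a point of $\partial \graphbar$; applying this to $-h$ controls the minimum. (Here $\partial \graphbar \ne \emptyset$; the only exception is a finite graph all of whose vertices are interior, where every harmonic function is constant and the statement is trivial.) Uniqueness is then immediate, for the difference of two solutions is harmonic on $\graph$ and vanishes on $\partial \graphbar$, hence vanishes identically.

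For existence I first solve the problem when the boundary data is the restriction to $\partial \graphbar$ of an eventually flat function $\phi \in \alg$, which has finite energy $B(\phi,\phi) < \infty$. Consider the class $\mathcal A_\phi$ of functions $u:\vertexset \to \real$ with $B(u,u) < \infty$ whose continuous extension (\thmref{formcont}) agrees with $\phi$ on $\partial \graphbar$; it contains $\phi$, so is nonempty. Let $m = \inf\{B(u,u) : u \in \mathcal A_\phi\}$ and take a minimizing sequence $u_n$ with $B(u_n,u_n)$ bounded. The energy estimate \eqref{formest} makes the extensions of the $u_n$ uniformly equicontinuous on $\graphbar$, and since they share the value $\phi(x_0)$ at a fixed point $x_0 \in \partial \graphbar$ they are uniformly bounded; by Arzela--Ascoli (as in \lemref{h1lem}) a subsequence converges uniformly to some $f \in C(\graphbar)$ with $f = \phi$ on $\partial \graphbar$. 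Fatou's lemma, applied termwise to the nonnegative sum defining $B$, gives $B(f,f) \le \liminf B(u_n,u_n) = m$, so $f$ is a minimizer. Testing minimality against $f + t\delta_v$ for an interior vertex $v$, where the variation $\delta_v$ has finite support and vanishes on $\partial \graphbar$, yields $B(f,\delta_v) = \sum_{u \sim v} C(u,v)(f(v) - f(u)) = 0$, that is $\Delta_\wt f(v) = 0$; thus $f$ is harmonic on $\graph$ and extends $\phi$ on $\partial \graphbar$.

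For general continuous $F$ I extend it to $\tilde F \in C(\graphbar)$ by the Tietze theorem and use \thmref{Stone} to choose $\phi_n \in \alg$ with $\phi_n \to \tilde F$ uniformly, so that $\phi_n \to F$ uniformly on $\partial \graphbar$. Writing $f_n$ for the harmonic extension of $\phi_n$ produced above, the difference $f_n - f_m$ is harmonic on $\graph$ with boundary values $\phi_n - \phi_m$, so the maximum-principle bound gives $\sup_{\graphbar}|f_n - f_m| = \sup_{\partial \graphbar}|\phi_n - \phi_m| \to 0$; hence $f_n$ converges uniformly to some $f \in C(\graphbar)$. Harmonicity passes to the limit because \eqref{harmdef} at a fixed interior vertex involves only its finitely many neighbours, and $f = F$ on $\partial \graphbar$ because $f_n = \phi_n \to F$ there. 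With the uniqueness above this proves the theorem. The essential difficulty is existence: the region of harmonicity is infinite, so no finite linear system is available and a harmonic function must instead be extracted by compactness. Both the compactness of $\graphbar$ (for Arzela--Ascoli and for the maximum principle) and weak connectivity (through the density of $\alg$ in \thmref{Stone}, which reduces arbitrary continuous data to finitely many ``jumps'') enter in an essential way, matching the example preceding the theorem in which the loss of compactness destroys solvability.
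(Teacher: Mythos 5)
Your proof is correct, and the outer layer of your argument (Tietze extension, approximation of the boundary data by $\alg$ via \thmref{Stone}, and the maximum principle to show the approximate harmonic extensions form a uniform Cauchy sequence) coincides with the paper's. Where you genuinely diverge is in producing the harmonic extension of eventually flat boundary data: the paper does this operator-theoretically, writing $f_n = g_n + h_n$ where $h_n$ solves $-\dop_{K,\wt}h_n = \Delta_{\wt}g_n$ with zero values on $\graphbar \setminus \graph$, which requires choosing an auxiliary vertex weight with $\wt(\graph) < \infty$ and invoking the strictly positive lower bound of \propref{lbnd} to invert the Friedrich's extension; you instead minimize the energy $B(u,u)$ over the class with prescribed values on $\partial\graphbar$, extracting a minimizer by the equicontinuity estimate \eqref{formest}, Arzela--Ascoli and Fatou, and deriving harmonicity from the first variation $B(f,\delta_v)=0$ at interior vertices. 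Your route buys two things: it is self-contained at that step (no auxiliary measure, no Friedrich's extension), and --- more substantively --- because you impose the boundary data as a constraint at \emph{all} of $\partial\graphbar$, including degree-one vertices, and only vary at interior vertices, you get the full statement in one pass; the paper's method forces $\Delta_{\wt}f_n = 0$ at degree-one vertices too, which is why it must first prove the theorem for graphs without boundary vertices and then perform the second-stage surgery replacing each pendant edge by an infinite path of the same total length. The paper's route, in exchange, reuses machinery (\propref{lbnd} and the solvability of the inhomogeneous equation) that it needs anyway for the semigroup constructions. Two small points worth making explicit in your write-up: the estimate \eqref{formest} is stated for vertices and should be extended to $x_0 \in \partial\graphbar$ by continuity before you use it for the uniform bound, and the degenerate case $\partial\graphbar = \emptyset$ (a finite graph with no degree-one vertices), where every harmonic function is constant and uniqueness fails vacuously, is excluded from the discussion --- as it implicitly is in the paper.
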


\begin{proof}
The proof breaks into two main parts.  For the first part, assume that 
$\graph $ has no boundary vertices, so $F$ is defined on $\graphbar \setminus \graph $.
The second part of the proof will extend this partial result to the full theorem.

The uniqueness is a standard consequence of the maximum principle since the difference
of two solutions would be $0$ on $\graphbar \setminus \graph $.  
By the Tietze extension theorem \cite[p. 179]{Royden}, 
$F$ may be extended to a continuous function on $\graphbar $.
By \thmref{Stone}, for any $n > 0$ there is a $g_n \in \alg $ with
\[\max_{x \in \graphbar} |F(x) - g_n(x)| \le 1/n.\]

Pick vertex weights $\wt $ with $\wt(\graph ) < \infty $.  
Since $g_n \in \alg $, the function $\Delta _{\wt} g_n$
has the value $0$ except at a finite set of vertices.  
The compactness of $\graphbar $ means that $\graph $ has finite diameter.
By \propref{lbnd} the operator $\dop _{K,\wt }$ has a strictly positive lower bound, 
so the equation 
\[-\dop _{\wt} h_n = \Delta _{\wt} g_n, \quad h_n \big | _{\graphbar \setminus \graph } = 0, \]
has a solution.  The function
$f_n = g_n + h_n$ is harmonic on $\vertexset $, continuous on $\graphbar $,
and satisfies   
\[\max_{x \in \graphbar \setminus \graph } |F(x) - f_n(x)| \le 1/n.\]

Since $\{ f_n \}$ converges uniformly on $\graphbar \setminus \graph $, the maximum
principle implies it is a uniformly Cauchy sequence on $\graphbar $, so
there is a continuous limit $f$ on $\graphbar $.  At each $v \in \vertexset $
\[f(v) = \lim_{n \to \infty} f_n(v) 
= \lim_{n \to \infty} \frac{1}{\sum_{u \sim v}C(u,v) }\sum_{u \sim v} C(u,v) f_n(u)\]
\[ = \frac{1}{\sum_{u \sim v}C(u,v) }\sum_{u \sim v} C(u,v) f(u),\]
so $f$ is harmonic on $\graph $.  This completes the first main part of the proof.    

The second main step involves extending the theorem to include boundary vertices.
For each edge $e = [v_0,v_b]$ with boundary vertex $v_b$ and edge length $r_e = C(v_0,v_b)^{-1}$,
delete the vertex $v_b$ and edge $e$ from $\graph $.  
For $n = 1,2,3,\dots $, add a sequence of new vertices $v_n$ 
and edges $e_n = [v_n,v_{n+1}]$.  Assume the edges $e_n$ have length $r_n$ with $\sum r_n = r_e$.
The resulting graph $\graph _1$ will have a completion $\graphbar _1$
containing a point $w = \lim_{n \to \infty} v_n$.

Suppose a continuous function $G: \graphbar _1 \setminus \graph _1 \to \real $
is given, and is extended to the harmonic function $g$ on $\graph _1$.  
With $C(v_n,v_{n+1}) = r_n^{-1}$ we have
\[ g(v_n) = \frac{1}{1/r_{n-1} + 1/r_n}[
\frac{1}{r_{n-1}}g(v_{n-1}) + \frac{1}{r_n}g(v_{n+1})]\]
\[ = \frac{r_n}{r_{n-1} + r_n}g(v_{n-1}) + \frac{r_{n-1}}{r_{n-1} + r_n}g(v_{n+1}),\]
or
\[(g(v_n) - g(v_{n-1}))/r_{n-1} =  (g(v_{n+1}) - g(v_{n}))/r_{n},\]
and so $g$ is a linear function of the distance along the path from $v_0$ to $w$. 

At $v_0$ the fact that $g$ is harmonic means
\[\sum_{u \sim v_0} C(u,v_0) (g(v_0) - g(u)) = 0.\]
The linearity of $g$ along the path means that  
\[C(v_1,v_0) (g(v_0) - g(v_1)) = r_0^{-1}(g(v_0) - g(v_1)) 
= r_e^{-1}(g(v_0) - g(w)).\] 
Now define the function $f:\graph \to \real $ by
$f(v) = g(v)$ for $v \in \graph _1 \cap \graph $,
and $f(v_b) = g(w)$.  The function $f$ is harmonic on $\graph $,
and extends to the continuous function $F:\graphbar \to \real$ where
$F(x) = G(x)$ for $x \in \graphbar \setminus \graph $, while 
$F(v_b) = G(w)$ for each boundary vertex $v_b$.

\end{proof}
 
\subsection{Boundary conditions and operators}

In this section absorbing and reflecting boundary conditions are used   
to construct distinct nonnegative self adjoint extensions of
$S_{K,\wt }$.  The constructed operators extend to semigroup generators
which are positivity preserving contractions on $l^1(\wt )$.
 
Given a closed set $\Omega \subset \{ \graphbar \setminus \graph \}$, 
let $\alg _{\Omega }$ denote the
subalgebra of $\alg $ vanishing on $\Omega $.    
Define the domain
\begin{equation} \label{domdef}
\domain _{\Omega } = \alg _{\Omega } \cap l^2(\wt). 
\end{equation}
Let $S_{\Omega ,\wt}$ denote the operator with domain $\domain _{\Omega }$
acting on $l^2(\wt)$ by $S_{\Omega }f = \Delta _{\wt }f$. 

By \propref{bform} the operator $S_{\Omega ,\wt }$ is nonnegative and symmetric, 
with quadratic form $\langle S_{\Omega ,\wt} f,f \rangle _{\wt} = B(f,f)$.   
Let $\dop _{\Omega ,\wt}$ denote the Friedrich's extension of $S_{\Omega ,\wt }$,
and note that the domain of $\dop _{\Omega ,\wt}$ is a subset of $H^1(\wt )$.
A slight modification of the proof of \thmref{zerobnd} shows that every function $f_j$
in the domain of $\dop _{\Omega , \wt}$ extends continuously to $\graphbar $
with $f_j(x) = 0$ for $x \in \Omega $.  

Given a compact set $\Omega \subset \graphbar$, let 
$N_R(\Omega ) = \{ z \in \graphbar | \ d(z,\Omega ) \le R \}$
be the set of points whose distance from $\Omega$ is at most $R$.

\begin{thm} \label{distinct}
Suppose $\graphbar $ is connected, weakly connected, and compact.
Assume that for $j = 1,2$ the sets $\Omega (j) \subset \{ \graphbar \setminus \graph \}$ are compact,
and that $\wt (N_R(\Omega (j))) < \infty $ for some $R > 0$. 
If $\Omega (1) \not= \Omega (2)$,
then $\dop _{\Omega (1),\wt }$ and $\dop _{\Omega (2),\wt }$ have distinct domains.
\end{thm}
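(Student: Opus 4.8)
The plan is to exploit the fact, noted just before the statement, that every function in the domain of $\dop_{\Omega,\wt}$ extends continuously to $\graphbar$ and vanishes on $\Omega$. Thus it suffices to produce a function lying in the domain of one operator but not the other. Since $\Omega(1) \ne \Omega(2)$, after possibly interchanging the indices I may fix a point $p \in \Omega(1) \setminus \Omega(2)$. Because every function in the domain of $\dop_{\Omega(1),\wt}$ must vanish at $p \in \Omega(1)$, it is enough to exhibit an $f$ in the domain of $\dop_{\Omega(2),\wt}$ with $f(p) \ne 0$. Recalling that $\domain_{\Omega(2)} = \alg_{\Omega(2)} \cap l^2(\wt)$ is contained in the domain of the Friedrich's extension $\dop_{\Omega(2),\wt}$, I would look for $f \in \alg$ with $f = 0$ on $\Omega(2)$, with $f(p) = 1$, and with $\sum_v f(v)^2 \wt(v) < \infty$.

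The one genuinely nontrivial requirement is the $l^2(\wt)$ bound: a separating function produced directly from \corref{sepsets} vanishes on $\Omega(2)$ and equals $1$ near $p$, but it may fail to be square summable because it can be nonzero on a region of infinite $\wt$-measure. This is exactly where the hypothesis $\wt(N_R(\Omega(1))) < \infty$ enters. Since $p \in \Omega(1)$, for $0 < r \le R$ the ball $B_r(p)$ satisfies $B_r(p) \subset N_R(\Omega(1))$, so $\wt(B_r(p)) < \infty$. As $\Omega(2)$ is compact, hence closed, and $p \notin \Omega(2)$, I can shrink $r$ so that in addition $B_r(p) \cap \Omega(2) = \emptyset$. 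Using that a weakly connected compact $\graphbar$ is totally disconnected (\thmref{totally}), there is a clopen set $U$ with $p \in U \subset B_r(p)$; then $U \cap \Omega(2) = \emptyset$ and $\wt(U) \le \wt(B_r(p)) < \infty$. The candidate is the indicator $f = \chi_U$, which has $f(p) = 1$, vanishes on $\Omega(2)$, and satisfies $\|f\|_\wt^2 = \wt(U) < \infty$.

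It remains to check that $f = \chi_U \in \alg$, that is, that $U$ meets only finitely many ``crossing'' edges; this is the step I expect to require the most care. I would argue it from the function theory already in place: $\chi_U$ is continuous on the compact space $\graphbar$, so by \thmref{Stone} there is $g \in \alg$ with $\|g - \chi_U\|_\infty < 1/3$, whence $g > 1/2$ precisely on $U$. Since $\graph$ is connected, $g$ has finite range, and each level set $g^{-1}(c)$ is clopen with indicator in $\alg$; summing these indicators over the finitely many values $c > 1/2$ shows $\chi_U \in \alg$. Consequently $f \in \alg_{\Omega(2)} \cap l^2(\wt) = \domain_{\Omega(2)}$, so $f$ lies in the domain of $\dop_{\Omega(2),\wt}$, while $f(p) = 1 \ne 0$ prevents it from lying in the domain of $\dop_{\Omega(1),\wt}$. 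Hence the two domains are distinct. Here compactness and weak connectivity supply the separating clopen sets, while the finite-neighborhood hypothesis is used solely to secure square summability.
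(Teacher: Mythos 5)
Your proof is correct and follows essentially the same route as the paper: pick $p\in\Omega(1)\setminus\Omega(2)$, build a function in $\alg$ equal to $1$ at $p$ and supported in a small ball avoiding $\Omega(2)$, use $\wt(N_R(\Omega(1)))<\infty$ for square summability, and conclude from the fact that domain functions vanish on $\Omega$. The only difference is cosmetic: the paper gets its separating function directly from \corref{sepsets} applied to $\{p\}$ and $\{y : d(p,y)\ge r\}$, whereas your detour through a clopen set, \thmref{Stone}, and level sets of the approximant is a more laborious (but valid) way to certify that the indicator lies in $\alg$.
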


\begin{proof}
Reversing the roles of $\Omega (1)$ and $\Omega (2)$ if necessary,
we may assume $\Omega (1) \not\subset \Omega (2)$.
Let $\Omega (2)^c$ denote the complement of $\Omega (2)$.
Find $x \in \Omega (1) \cap \Omega (2)^c$, and an $r$ with $0 < r < R$ such that
$N_r(x) \subset \Omega (2)^c$.  

The sets $\{ x \} $ and $V = \{ y \in \graphbar | d(x,y) \ge r \}$ are disjoint and compact.
\corref{sepsets} shows there is a function $f \in \alg $
with $f(x) = 1$, while $f(z) = 0$ for all $z \in V$.  
Since $\wt (N_r(x)) < \infty $, the function $f$ is in the domain of 
$\dop _{\Omega (2), \wt }$, but $f$ is not in the domain of $\dop _{\Omega (1), \wt }$. 
\end{proof}

Since the operators $\dop = \dop _{\Omega ,\wt }$ are nonnegative self adjoint on
$l^2(\wt )$, the operators of the semigroup $\exp (-t\dop _{\Omega , \wt})$ 
are $l^2(\wt)$ contractions for $t \ge 0$.
For applications to probability this is not sufficient. 
Let $Quad(\dop )$ denote the domain of $\dop ^{1/2}$.
The method of \cite[p. 20]{Davies}
will show that the quadratic forms 
\[Q(f) = \langle \dop ^{1/2}f , \dop ^{1/2}f \rangle _{\wt} ,\quad f \in Quad (\dop )\]
associated to $\dop _{\Omega ,\wt }$ are Dirichlet forms.  

There are two conditions to check.  
The first condition is that $ f \in Quad (\dop ) $ implies 
$|f| \in Quad (\dop )$ and $B(|f|,|f|) \le B(f,f)$.
Since the form is
\[B(f,f) = \frac{1}{2}\sum_{v \in \vertexset} \sum_{u \sim v} C(u,v)(f(v) - f(u))^2,\]
the first condition holds for $f \in \domain _{\Omega }$.  If $f \in Quad (\dop )$ then
there is a sequence $f_n \in \domain _{\Omega }$ with
\[ \langle f, f \rangle _{\wt} + B(f,f) = \lim_{n \to \infty} 
 \langle f_n, f_n \rangle _{\wt} + B(f_n,f_n).\]
It follows that 
\[ \langle |f|, |f| \rangle _{\wt} + B(|f|,|f|) = \lim_{n \to \infty} 
 \langle |f_n|, |f_n| \rangle _{\wt} + B(|f_n|,|f_n|),\]
and $B(|f|,|f|) \le B(f,f)$.

The second condition is that if $f \in Quad(\dop )$ and $g \in l^2(\wt)$
with $|g(v)| \le |f(v)|$ and $|g(v) - g(u)| \le |f(v) - f(u)|$ for all $u,v \in \vertexset$,
then $g \in Quad(\dop )$ and $Q(g) \le Q(f)$. This is even more transparent than the first
condition.

Again quoting \cite[p. 12-13]{Davies}, the following result is established.

\begin{thm}
For $t \ge 0$ the semigroups $\exp (-\dop _{\Omega ,\wt}t)$ on $l^2(\wt )$ 
are positivity preserving contractions on $l^p(\wt )$ for $1 \le p \le \infty $ 
\end{thm}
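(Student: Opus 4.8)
The plan is to read this off from the abstract correspondence between Dirichlet forms and symmetric Markovian semigroups, for which the two conditions just verified are precisely the hypotheses. Having shown that $Q$ is a Dirichlet form, I would appeal directly to the theory in \cite[p. 12--13]{Davies}. A nonnegative self adjoint operator $\dop = \dop_{\Omega,\wt}$ on $l^2(\wt)$ whose form satisfies these two Beurling--Deny conditions generates a symmetric submarkovian semigroup: the first condition (that $f \in Quad(\dop)$ implies $|f| \in Quad(\dop)$ with $B(|f|,|f|) \le B(f,f)$) yields positivity preservation, $\exp(-t\dop)f \ge 0$ whenever $f \ge 0$, and the second (normal contraction) condition yields the $l^\infty(\wt)$ contraction bound, since the unit contraction of any $f$ is an admissible competitor $g$. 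Combined with the $l^2(\wt)$ contractivity that is automatic from nonnegativity and self adjointness, this settles the cases $p = 2$ and $p = \infty$ together with positivity preservation.

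The remaining values of $p$ follow by duality and interpolation. Because $\dop_{\Omega,\wt}$ is self adjoint, each $\exp(-t\dop)$ is self adjoint on $l^2(\wt)$, so the $l^\infty(\wt)$ contraction property dualizes to an $l^1(\wt)$ contraction: for $f \in l^1(\wt) \cap l^2(\wt)$ one estimates $\|\exp(-t\dop)f\|_1 = \sup \{ \langle \exp(-t\dop)f, g \rangle_{\wt} : \|g\|_\infty \le 1 \} \le \|f\|_1$. Riesz--Thorin interpolation between the $l^1$ and $l^\infty$ endpoint bounds then gives the contraction property on every $l^p(\wt)$ with $1 \le p \le \infty$. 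Since the vertex measure $\wt$ is $\sigma$-finite, the subspace $l^1(\wt) \cap l^\infty(\wt)$ is dense in each $l^p(\wt)$ with $p < \infty$, so these bounds produce a single semigroup acting compatibly across all the spaces.

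Since the substance of the argument — the two Dirichlet form conditions — is already in hand, I do not expect a genuine obstacle; the only point requiring attention is bookkeeping, namely checking that the operators defined initially on $l^2(\wt)$ extend consistently to one semigroup on each $l^p(\wt)$. This is exactly the content packaged in the cited pages of \cite{Davies}, and it rests only on the endpoint contraction bounds together with the density of $l^1(\wt) \cap l^\infty(\wt)$ in each $l^p(\wt)$; no further graph theoretic input is needed beyond the Dirichlet form property, so the proof reduces to invoking the cited result.
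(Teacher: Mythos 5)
Your proposal is correct and follows essentially the same route as the paper: the paper likewise verifies the two Dirichlet form (Beurling--Deny) conditions for $Q$ and then simply invokes \cite[p.~12-13]{Davies} to conclude that the semigroups are positivity preserving contractions on $l^p(\wt)$ for $1 \le p \le \infty$. Your additional unpacking of the duality and interpolation steps is exactly the content of the cited result and introduces no new ideas beyond what the paper relies on.
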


Arguing by analogy, one expects the operator $\dop = \dop _{\Omega ,\wt}$ to 
exhibit 'absorption' at $\Omega $ and 'reflection' at $\Omega ^c$.
This analogy may be tested by estimating the rate of decay of
a probability density function initially supported near the 'reflecting' boundary.  

Suppose $\graphbar $ is compact, connected and weakly connected.
Using \thmref{totally} and the subsequent remarks, let $V$ be a 
clopen neighborhood of $\Omega $, with $U = V^c$.  The edge boundary of $U$
will be the set $\partial _e U$ of edges $e = [u,v] \in \graph $ with 
$ u \in U$ and $v \in V$.  

The edge boundary $\partial _e U $ is a finite
set since by \thmref{disconnect} there is a finite set of edges $W$ such that any path 
from $U$ to $V$ contains an edge from $W$, and $\partial _e U \subset W$.
Let $1_U$ be the indicator function of $U$, with $1_U(x) = 1$ for $x \in U$ 
and $1_U(x) = 0$ for $x \in V$.  The function $1_U$ is in $\alg $.
If $\wt (V) < \infty $ then $1_U $ is in the domain of $\dop $.

Suppose $p_0:\vertexset \to [0,\infty )$ is a bounded probability density 
with $p_0(v) \ge 0$ and $\sum_{\graph } p_0(v)\wt (v) = 1$.
For $t \ge 0$ define $p(t,v) = \exp(-t\dop )p_0(v)$ and 
denote the density integrated over $U$ by $P_U(t) = \sum_{v\in U} p(t,v)\wt (v)$. 

Using $p_0 \in l^2(\wt )$, we find that for $t > 0$,
\[\frac{d}{dt}P_U(t) 
= \frac{d}{dt} \langle \exp (-t\dop )p_0, 1_U \rangle _{\wt}
 = - \langle \dop \exp (-t\dop )p_0, 1_U \rangle _{\wt}\]
\[ = -\langle \exp (-t\dop )p_0, \dop 1_U \rangle _{\wt}
= - \sum_{(u,v) \in \partial _e U} C(u,v) (p(t,u) - p(t,v)) .\]

Since the semigroup is positivity preserving and a contraction on $l^{\infty}(\wt)$,
\begin{equation} \label{decaybnd}
\frac{d}{dt}P_U(t) \ge  
- \| p_0 \| _{\infty } \sum_{(u,v) \in \partial _e U} C(u,v) .
\end{equation}
That is, the decay rate for $P_U(t)$ 
is controlled by what happens at $\partial _e U$, without regard to
the rest of the boundary of $U$.

\newpage

\bibliographystyle{amsalpha}

\end{document}